\newcommand{\A}{\mathbb{A}}
\newcommand{\Z}{\mathbb{Z}}
\newcommand{\Q}{\mathbb{Q}}
\newcommand{\C}{\mathbb{C}}
\renewcommand{\P}{\mathbb{P}}
\newcommand{\AS}{\mathcal{A}}
\newcommand{\DS}{\mathcal{D}}
\newcommand{\FS}{\mathcal{F}}
\newcommand{\RS}{\mathcal{R}}
\newcommand{\ZS}{\mathcal{Z}}
\newcommand{\lra}{\longrightarrow}
\DeclareMathOperator{\Pic}{Pic}
\DeclareMathOperator{\NS}{NS}
\DeclareMathOperator{\Aut}{Aut}
\theoremstyle{plain}
\newtheorem{theorem}{Theorem}[section]
\newtheorem{corollary}[theorem]{Corollary}
\newtheorem{proposition}[theorem]{Proposition}
\newtheorem{lemma}[theorem]{Lemma}
\theoremstyle{definition}
\newtheorem{remark}[theorem]{Remark}
\title{Bhabha Scattering and a special pencil of K3~surfaces}
\author{Dino Festi and Duco van Straten}
\begin{document}

\begin{abstract}
We study a pencil of K3 surfaces that appeared in the $2$-loop diagrams in 
Bhabha scattering. By analysing in detail the Picard lattice of the general 
and special members of the pencil, we identify the pencil with the 
celebrated Ap\'ery--Fermi pencil, that was related to Ap\'ery's proof of the 
irrationality of  $\zeta(3)$ through the work of F. Beukers, C. Peters and 
J. Stienstra. The same pencil appears miraculously in different and seemingly 
unrelated physical contexts.
\end{abstract}
\maketitle
\section*{Introduction}
The electron-positron scattering process $e^++e^-\rightarrow e^++e^-$ is called  {\em Bhabha scattering}, after Homi J. Bhabha, who calculated
the differential cross-section to lowest order in $1935$, \cite{bhabha}. 
This calculcation can be found in almost any textbook on 
quantum field theory and is now routinely relegated to the exercise 
sheets of the courses on the subject.

At lowest order, there are two Feynman diagrams contributing to the amplitude 
of this process, called the scattering and the annihilation diagram.\\

\begin{center}
\includegraphics[height=4.5cm]{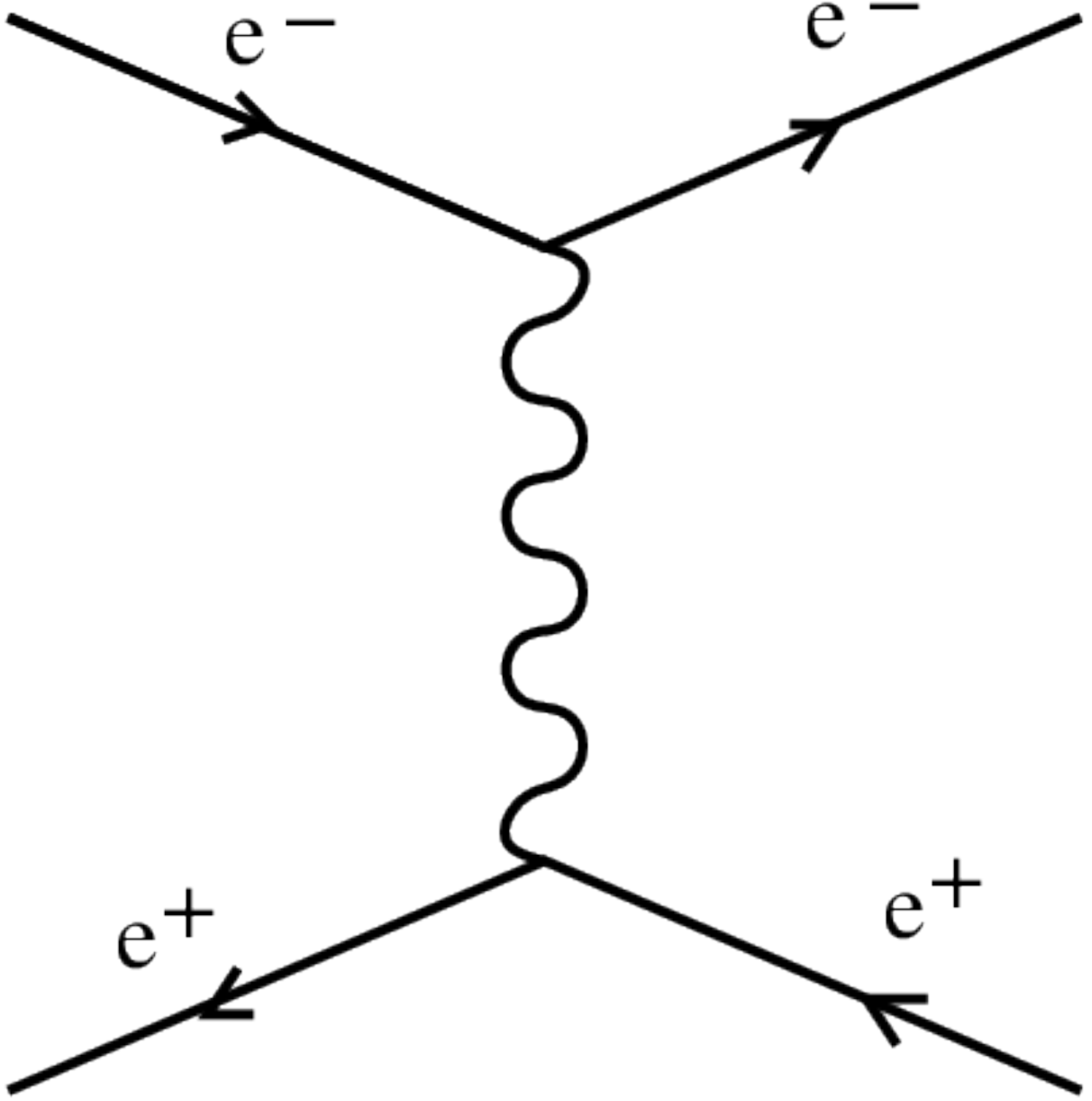}
\hspace{2.4cm}
\includegraphics[height=4.5cm]{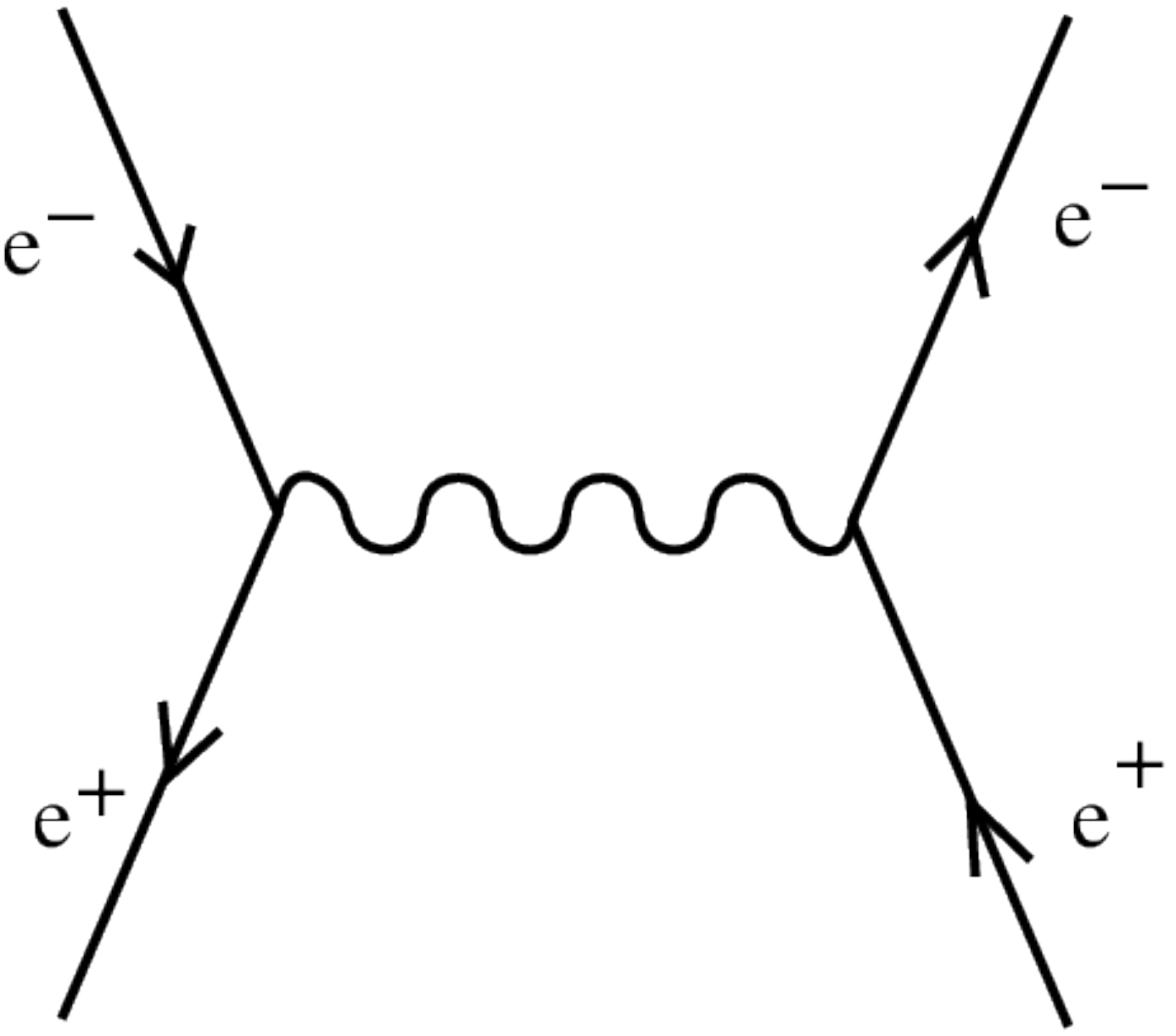}
\vskip 10pt
{\em Scattering and annihilation diagrams.}
\end{center}

In terms of the {\em Mandelstam variables}
\[ s=-(p_1+p_2)^2, \;\;\;\;t=-(p_1-p_3)^2\]
the answer obtained by Bhabha can be written in the form 
\[\frac{ d\sigma}{d\Omega} =
\frac{\alpha^2}{s}\left\{ \frac{1}{s^2}\left[ st+\frac{s^2}{2}+(t-2m^2)^2\right]
+
\frac{1}{st}\left[ (s+t)^2-4m^4\right]
+
\frac{1}{t^2}\left[ st+\frac{t^2}{2}+(s-2m^2)^2\right]
\right\}
\]
where $\alpha=e^2/\hbar c \approx 1/137$ is the fine-structure constant and $m$ is the mass of the electron (cf. \cite[pp. 2, 3]{boncianietall2}).
 
The three terms in the formula have an interesting interpretation in terms
of the two Feynman diagrams. As the determination of the cross section involves 
the square modulus of the amplitude, the first and last term in the formula
describe the contributions  of scattering and annihilation, whereas the middle term represents the interference (or `exchange') between the two virtual processes.

According to the rules of Quantum Electrodynamics (QED), 
the complete amplitude for Bhabha scattering
appears as a sum of integrals that correspond to certain Feynman diagrams. 
The next order of approximation depends on the virtual processes described by
the ten different $1$-loop diagrams and at $2$-loops there are many diagrams 
to consider.

The corresponding integrals in fact diverge and need to be regularised. One 
of the most powerful regularisation schemes is {\em dimensional regularisation}.  All integrals are taken in $D$ spacetime dimensions, then developed as a
Laurent series in $\epsilon=\frac{4-D}{2}$ and regularised by subtracting the 
polar part. 
It was found in \cite{boncianietall}, \cite{boncianietall2} that 
all coefficients in the $\epsilon$-expansion of $1$-loop Feynman integrals can be represented in 
terms of so-called {\em harmonic polylogarithms}, a class of functions introduced in \cite{remiddivermaseren}, 
evaluated at arguments that are {\em rational expressions} in variables $x,y$
related to the Mandelstam variables
\[ -s=m^2 \frac{(1-x)^2}{x},\;\; x=\frac{\sqrt{4m^2-s}-i\sqrt{s}}{\sqrt{4m^2-s}+i\sqrt{s}},\]
\[ -t=m^2 \frac{(1-y)^2}{y},\;\; y=\frac{\sqrt{4m^2-t}-i\sqrt{t}}{\sqrt{4m^2-t}+i\sqrt{t}}.\]

In \cite{hennsmirnov} the $2$-loop master integrals were reconsidered 
and  expressed in terms of Chen iterated integrals:
the irrationality
\[ Q=\sqrt{\frac{(x+y)(1+xy)}{x+y-4xy+x^2y+xy^2}}\]
was needed as argument in the logarithmic forms 
of the differential equation associated to one special master integral.
The same irrationality appears in~\cite{bddpt},
where the same $2$-loop master integrals are reconsidered,
but this time are expressed in terms of elliptic polylogarithms.
The question arises if one can `undo' the square root by a rational 
substitution of the form
\[ x=x(s,t),\;\;\;y=y(s,t).\]
More precisely, if we introduce a further variable $z$ and put $z=(x+y)/Q$, we
obtain, after squaring and clearing denominators, a quartic surface in the complex affine space in $\A_\C^3$ 
with equation
\begin{equation}\label{eq:Q} 
Q\colon z^2 (1+xy) =(x+y)(x+y-4xy+x^2y+xy^2). 
\end{equation}
\begin{center}
\includegraphics[height=5cm]{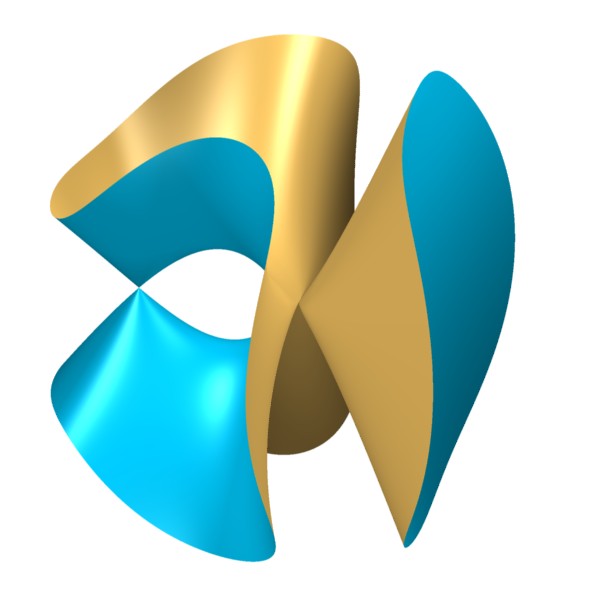}

 {\em The quartic $Q$.}
\end{center}
\begin{remark}\label{r:Qsing}
The quartic $Q$ has eight singular points, all of type $A_k$ for some $k$, 
two of which are visible in the picture above;
six more singular points lie in the plane at infinity.
More precisely, the singular points have the following coordinates.
\[
\begin{array}{|c|c|c|}
\hline
\textup{Coordinates}&\textup{Singularity}\\[1mm]
\hline
\hline
(0:0:0:1)  & A_3\\
\hline
(0:1:1:0), (0:1:-1:0), (1:0:1:0), (1:0:-1:0)   & A_2\\
\hline
(1:1:0:1), (0:0:1:0), (1:-1:0:0)  & A_1\\
\hline
\end{array}
\]
\end{remark}

As this quartic has only simple singularities (see remark above), it is birational to a 
K3 surface, which implies that  no rational parameterisation is 
possible, see also \cite{bvsw}.

Using coordinate transformations, this surface defined by the irrationality $Q$ 
can be transformed in many different forms. In his talk (June 2, 2015) at the 
MITP conference {\em Amplitudes, Motives and beyond}, Johannes Henn 
mentioned the particular nice surface with the equation
\[0=1+\frac{x^2}{1-x^2}+\frac{y^2}{1-y^2}+\frac{z^2}{1-z^2},\]
which after clearing of denominators leads to the equation 
\[B\colon 0=f(x,y,z):=1-(x^2y^2+y^2z^2+z^2x^2)+2 x^2y^2z^2.\]
He posed the question if this surface is rational, i.e., if there exist 
non-trivial rational functions $x(s,t),y(s,t),z(s,t)$ that satisfy the 
above equation identically, 
thus providing a rational parametrisation of the surface $B$. 
The answer to this question is no. 
In this paper we will compute the geometric Picard lattice of a family of K3 surfaces containing $B$ and 
we will show that this surface is in fact a well-known beauty,
as stated in  the following theorem.\\
 
\centerline{\bf Theorem} 
\vskip 10pt
{\em The surface $B$ is birational to a K3 surface with Picard number 
equal to $20$. 
Its geometric Picard lattice is isomorphic to
\[ U \oplus E_8(-1)^2 \oplus \langle-4 \rangle \oplus \langle -2\rangle.\]}

It is not hard to see that the $8$ points $(x,y,z)=(\pm 1,\pm 1,\pm 1)$
are ordinary double points and make up all singularities of the surface $B$ in 
affine space, but more complicated singularties are present at infinity.
By the reciprocal transformation 
\[  u = \frac{1}{x},\;\;\;v = \frac{1}{y},\;\;\;w = \frac{1}{z}\]
the surface is mapped to a surface $R$
\[R:\;\;\; 0=1+\frac{1}{u^2-1}+\frac{1}{v^2-1}+\frac{1}{w^2-1}\]
which after clearing denominators produces a slightly simpler equation 
\[R\colon 0=u^2v^2w^2-u^2-v^2-w^2+2 \]
for the surface.

\begin{center}
\includegraphics[height=5cm]{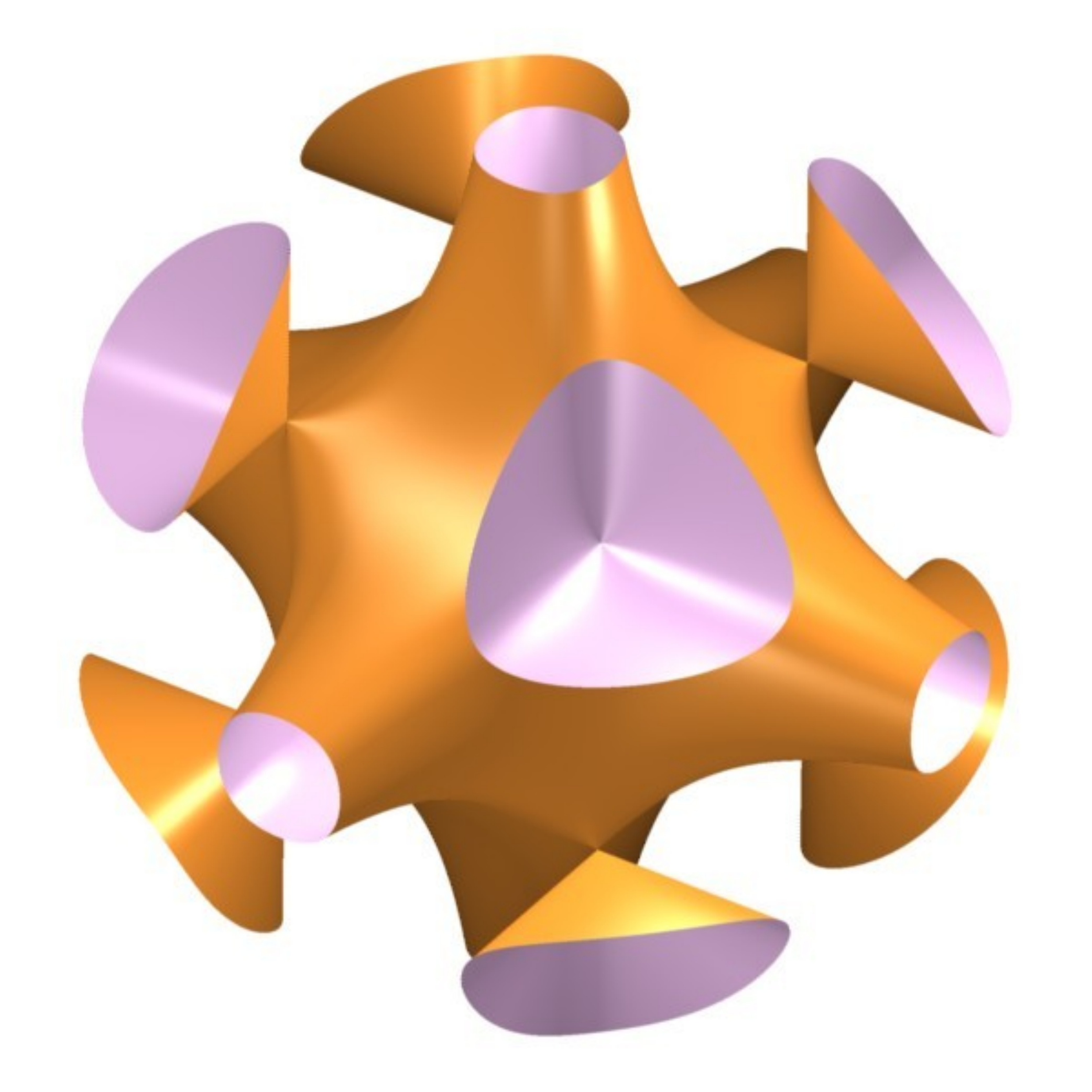}
\hspace{2cm}
\includegraphics[height=5cm]{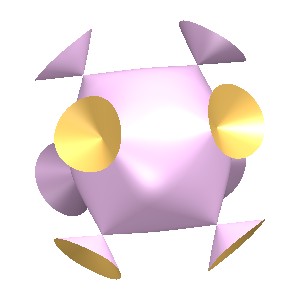}

{\em The surface $B$ and its reciprocal $R$.}
\end{center}

\section{The pencil and representation as double sextic}
The above surface $R$ can be seen as a member of a nice symmetric (cf. Remark~\ref{r:Symmetries}) one-parameter 
family of surfaces 
with equation

\[\RS: 0=1+s+\frac{1}{u^2-1}+\frac{1}{v^2-1}+\frac{1}{w^2-1}=0 \subset \A^1\times \A^3 ,\]
where we consider $s$ as parameter. We write
\[\pi:\RS \to \A^1,\;\;\;(u,v,w,s) \mapsto s\]
for the projection and denote by $\RS_s=\pi^{-1}(s)$ the fibre over $s$, so $R=\RS_0$.
Clearing denominators we can write the equation for $\RS_s \subset \A^3$ as:
\[ u^2v^2w^2-u^2-v^2-w^2+2+s(u^2-1)(v^2-1)(w^2-1)=0. \]

\begin{remark}\label{r:Symmetries}
From its defining equation, 
one can easily see that $\RS_s$ admits (at least) 48 automorphisms:
those coming from changing the sign of and permuting  the coordinates.
More precisely, these automorphisms generate a subgroup of $\Aut (\RS_s)$ isomorphic to
$$
S_3 \ltimes (\Z/2\Z)^3, 
$$
where the action of $S_3$ on $(\Z/2\Z)^3$ is given by permuting the coordinates.
\end{remark}

In order to study the geometry of these surfaces, we will brutally break the symmetry
and use this equation to express $w^2$ in terms of $u,v$ (and $s$), obtaining

\[
w^2=\frac{u^2+v^2-2+s(u^2v^2-u^2-v^2+1)}{u^2v^2 -1 +s(u^2v^2-u^2-v^2+1)}.
\]
Using the birational map $(u,v,w) \mapsto (1/u,1/v,w)=:(x,y,w)$ the threefold $\RS$ 
is seen to be  birationally equivalent to the threefold given by
\[ w^2=\frac{f_1}{f_0} \]
with
\[
f_i(x,y):=(1-x^2y^2) +(s-i)(x^2-1)(y^2-1), \;\;\;i=0,1.\]

Using the map $(x,y,w)\mapsto (x,y,wf_1(x,y))$, we see that $\RS$ is birationally equivalent to the threefold defined by the equation

\[w^2=f_1 \cdot f_0 .\]
This represents our threefold as a double cover, ramified over the union of two $s$-dependent 
quartic curves. 
Let us write $F_i(x,y,z)$ for the homogenization in $z$ of the polynomial $f_i$, for 
$i=0,1$.  The quartics in $\P^2$ with homogeneous coordinates $(x,y,z)$ defined by $F_1$ 
and $F_2$ intersect in the points $(\pm 1:\pm 1: 1)$ with multiplicity $2$ and in the points 
$(1:0:0), (0:1:0)$ with multiplicity $4$. The last two points form also the singular locus of 
both curves.

The Cremona transformation $\gamma$ on the base points 
$$(1:0:0),\;\; (0:1:0),\;\; (1:1:1)$$
transforms these singular quartics into non-singular cubics $B_i$
defined by the vanishing of the polynomials
\[G_i(x,y,z):=(x^2+y^2)z-2xy(x+y)+(s+1-i)(2x-z)(2y-z)z,\;\;i=0,1.\]
We let $\P:=\P(1,1,1,3)$ be the weighted projective space on the variables
$x,y,z$ of weight $1$ and a variable $w$ of weight $3$ and let $\DS$ to be 
the threefold in $\A^1 \times \P$ defined by the equation

\begin{equation}\label{eq:newpencil}
\DS:\;\;w^2= G_0\cdot G_1 .
\end{equation}
The fibre over $s$ of the map $\DS \to \A^1$ will be denoted by $\DS_s$.\\

So we have shown the following result.

\begin{proposition}
The family $\RS$ is birationally equivalent to the family $\DS$.
\end{proposition}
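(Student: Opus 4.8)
The plan is to obtain the birational equivalence by stringing together the sequence of maps exhibited in the paragraphs above and checking that every link is birational and commutes with the projection to the parameter line $\A^1$, so that the composite is a birational equivalence of threefolds $\RS \dashrightarrow \DS$ over $\A^1$. The first three reductions are elementary: the defining equation of $\RS_s$ is quadratic in $w$, so solving for $w^2$ is a fibrewise birational operation, and so are the coordinate inversion $(u,v,w)\mapsto(1/u,1/v,w)$ and the substitution $w\mapsto w\,f_1(x,y)$; all three involve $s$ only through the coefficients and hence are birational over $\A^1$. After them the family is presented as $w^2=F_0F_1$, a pencil of double covers of $\P^2_{(x,y,z)}$ branched along the union of the two quartics $\{F_0=0\}$ and $\{F_1=0\}$, the coordinate $w$ having weight $4$.

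The real content is the Cremona step, so next I would analyse the quadratic transformation $\gamma$ centred at the three non-collinear points $(1:0:0)$, $(0:1:0)$, $(1:1:1)$. Inspecting the leading terms of $F_i$ at each of these points shows that every member of the pencil has an ordinary double point at $(1:0:0)$ and at $(0:1:0)$ and is smooth at $(1:1:1)$, so $F_i$ has the multiplicity vector $(2,2,1)$ at the base locus, independently of $i$ and $s$. By the classical behaviour of a quadratic Cremona transformation the strict transform of a plane curve of degree $d$ with multiplicities $(m_1,m_2,m_3)$ at the base points has degree $2d-m_1-m_2-m_3$, which here equals $2\cdot 4-2-2-1=3$; substituting the explicit formulae for $\gamma$ into $F_i$ one then identifies this cubic with $\{G_i=0\}$. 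Since $F_0$ and $F_1$ carry the same multiplicities at the base locus, the monomial factor (a product of the equations of the three contracted lines to prescribed powers) picked up in transforming $F_0$ equals the one picked up in transforming $F_1$; hence the factor acquired by the product $F_0F_1$ is a perfect square, may be absorbed into $w$, and the weight of $w$ drops from $4$ to $3$. This rewrites the family as $w^2=G_0G_1$ over $\P=\P(1,1,1,3)$, that is, as $\DS$, and since $\gamma$ does not involve $s$ the equivalence is again over $\A^1$. Composing the four maps then gives the statement.

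I expect the only real obstacle to be bookkeeping rather than anything conceptual: one must pin down the powers to which the contracted lines enter and check that the resulting correction factor on $F_0F_1$ really is a square, so that the double-cover structure (and hence the K3, hence birational, nature of the construction) is preserved and the family genuinely lands in $\P(1,1,1,3)$ rather than in $\P(1,1,1,4)$; and one should note once and for all that at every stage the map is dominant and generically injective over $\A^1$, which holds away from the finitely many degenerate values of $s$ and the exceptional loci of $\gamma$.
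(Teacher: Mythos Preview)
Your proposal is correct and follows essentially the same route as the paper: the paper's ``proof'' is precisely the chain of explicit birational maps displayed in the paragraphs preceding the proposition, and you have simply made that chain rigorous step by step. Your added justification of the Cremona step---the multiplicity vector $(2,2,1)$, the degree count $2\cdot4-2-2-1=3$, and the observation that equal multiplicities for $F_0$ and $F_1$ force the extraneous factor on $F_0F_1$ to be a perfect square---fills in exactly what the paper leaves implicit by merely writing down the cubics $G_i$.
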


\section{Singularities of the generic fibre}
We first consider the generic fibre $\DS_{\eta}$, i.e. the fibre
over the generic point $\eta \in \A^1$.
The surface $\DS_{\eta}$ is the double cover of $\P^2$ branched over the
union of the two smooth cubic curves $B_0$, $B_1$. As a consequence,
the branch curve $B:=B_0 \cup B_1$ and hence the surface $\RS_{\eta}$, 
has singularities of type $A_{2n-1}$ at a point where $B_0$ and $B_1$ 
have intersection multiplicity $n$. A direct computation learns
that, for generic $s$, the curves $B_0$ and $B_1$ intersect in 
exactly four points $P_i,\;\; i=1,2,3,4$.
\[
\begin{array}{|c||c|c|c|c|c|}
\hline
\textup{Point}&P_1&P_2&P_3&P_4\\[1mm]
\hline
\textup{Coordinates}&(1:0:0)&(0:1:0)&(1:1:2)&(-1:1:0)\\[1mm]
\hline
\textup{Multiplicity}&3&3&2&1\\[1mm]
\hline
\textup{Singularity}&A_5&A_5&A_3&A_1\\[1mm]
\hline
\end{array}
\]

So the surface $\RS_{\eta}$ has exactly $4$ singular points $Q_i,i=1,2,3,4$
corresponding to the singular points $P_i,i=1,2,3,4$ of the branch curve $B$.
All singularities are of type $A_k$, hence we have the following corollary.

\begin{corollary}
The minimal resolution of $\DS_{\eta}$ is a K3 surface.
\end{corollary}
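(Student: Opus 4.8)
The plan is to verify directly the two defining properties of a K3 surface --- triviality of the canonical bundle and vanishing of the first cohomology of the structure sheaf --- for the minimal resolution $\mu\colon \widetilde{\DS}_{\eta}\lra \DS_{\eta}$, reading everything off the presentation of $\DS_{\eta}$ as a double cover $\phi\colon \DS_{\eta}\lra \P^{2}$ branched along the sextic $B_{0}\cup B_{1}$. First I would record that $\DS_{\eta}$ is a normal integral projective surface: by the computation preceding the statement, for generic $s$ the cubics $B_{0}$, $B_{1}$ are smooth and meet in only the four points $P_{i}$, so $G_{0}G_{1}$ is square-free and $w^{2}=G_{0}G_{1}$ defines a normal integral hypersurface in $\P(1,1,1,3)$ that does not pass through the singular point $(0:0:0:1)$ of the ambient space; hence its only singularities are the points lying over the $P_{i}$, all of type $A_{k}$ by the discussion above, and in particular rational double points.

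Next I would compute the relevant invariants of the (still singular) surface $\DS_{\eta}$ from the double cover structure. Writing the branch sextic as a section of $\mathcal{O}_{\P^{2}}(6)=L^{\otimes 2}$ with $L=\mathcal{O}_{\P^{2}}(3)$, one has $\phi_{*}\mathcal{O}_{\DS_{\eta}}\cong \mathcal{O}_{\P^{2}}\oplus L^{-1}$ and, since $\DS_{\eta}$ is Gorenstein (a hypersurface in the smooth locus of $\P(1,1,1,3)$), $\omega_{\DS_{\eta}}\cong \phi^{*}(\omega_{\P^{2}}\otimes L)\cong \phi^{*}\mathcal{O}_{\P^{2}}\cong \mathcal{O}_{\DS_{\eta}}$, using $\omega_{\P^{2}}\cong \mathcal{O}_{\P^{2}}(-3)$. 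Since $H^{1}$ of every line bundle on $\P^{2}$ vanishes, the projection formula gives $H^{1}(\DS_{\eta},\mathcal{O}_{\DS_{\eta}})=0$.

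The conclusion then follows from two classical properties of rational double points. They are \emph{canonical}, so the minimal resolution is crepant: $\omega_{\widetilde{\DS}_{\eta}}\cong \mu^{*}\omega_{\DS_{\eta}}\cong \mathcal{O}_{\widetilde{\DS}_{\eta}}$, whence $K_{\widetilde{\DS}_{\eta}}$ is trivial. They are also \emph{rational singularities}, so $\mu_{*}\mathcal{O}_{\widetilde{\DS}_{\eta}}\cong \mathcal{O}_{\DS_{\eta}}$ and $R^{1}\mu_{*}\mathcal{O}_{\widetilde{\DS}_{\eta}}=0$, and the Leray spectral sequence gives $H^{1}(\widetilde{\DS}_{\eta},\mathcal{O})\cong H^{1}(\DS_{\eta},\mathcal{O})=0$. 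A smooth projective surface with trivial canonical bundle and $h^{1}(\mathcal{O})=0$ is by definition a K3 surface; over the generic point one applies this after base change to $\overline{\C(s)}$, under which all of the above is stable.

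No genuine obstacle is expected here. The only step that needs a touch of care is the one already implicit in the table above: that a plane curve singularity consisting of two smooth branches with contact of order $n$ produces exactly an $A_{2n-1}$ rational double point on the double cover. That is a one-line local computation, since $w^{2}=(y-x^{n})(y+x^{n})$ becomes $ab=-x^{2n}$ after the substitution $a=w-y$, $b=w+y$; everything else in the argument is formal bookkeeping with the double cover.
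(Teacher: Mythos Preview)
Your argument is correct and is exactly the standard justification underlying the paper's statement; the paper does not give a separate proof but simply records the corollary as an immediate consequence of the fact that $\DS_{\eta}$ is a double cover of $\P^{2}$ branched along a sextic with only $A_{k}$ singularities. You have spelled out the details---trivial dualizing sheaf from the double-cover formula, $H^{1}(\mathcal{O})=0$ from the splitting $\phi_{*}\mathcal{O}\cong\mathcal{O}\oplus\mathcal{O}(-3)$, and preservation of both under a crepant resolution of rational double points---that the paper leaves implicit.
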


As is well-known (see e.g. \cite{BPHvdV}), the minimal resolution 
\[\pi: S \lra \DS_{\eta}\]
is obtained by replacing a  singular point of type $A_k$ by a
a chain of $k$ projective lines. For example, to resolve the singularity of 
the branch curve $B$ at $P_1$, one needs three blow-ups and hence we get three 
exceptional divisors 
$D_{1,2},D_{1,1},D_{1,0}$, intersecting in a chain.
The strict transform of the branch locus intersects the exceptional divisor 
coming from the last transform, say $D_{1,0}$, in two distinct points and 
does not intersect the other two. By Riemann--Hurwitz, the double cover of 
$D_{1,2}$ and $D_{1,1}$ is an unramified double cover of $\P^1$, i.e., two 
copies of $\P^1$; the double cover of $D_{1,0}$ is a double cover of $\P^1$ 
ramified above two points, i.e., it is isomorphic to $\P^1$.
Let $E_{1,i}$ and $E_{1,-i}$ be the two components of the double cover of $D_{1,i}$, 
for $i=1,2$, and let $E_{1,0}$ the cover of~$D_{1,0}$.
As $D_{1,1}$ intersect $D_{1,0}$ and $D_{1,2}$, we have that $E_{1,1}$ intersect 
$E_{1,0}$ and $E_{1,\pm 2}$.
As the name of the components are only defined up to a switch of the cover, 
we define $E_{1,2}$ as the component of the double cover  $D_{1,2}$ intersecting $E_{1,1}$. 
It follows that $E_{1,-1}$ intersects $E_{1,-2}$ and $E_{1,0}$.
Hence the divisors $E_{1,-2},E_{1,-1},E_{1,0},E_{1,1},E_{1,2}$ give rise to an $A_5$-configuration. 
We will use the analogous notations for the exceptional divisors lying above the other points $Q_i$, $i=2,3,4$. So we have exceptional divisors $E_{i,j}$ on~$S$ lying above the point $Q_i$;
the divisor $E_{i,0}$ will always denote the middle one, the divisors $E_{i,\pm 1}$ 
will be those intersecting $E_0$, for $i=1,2,3$,
the divisor $E_{i,\pm 2}$ will denote the divisors intersecting $E_{i,\pm 1}$,
for $i=1,2$.

\section{Some divisors on the generic fibre}
It is our aim to determine the Picard lattice of the smooth model $S$ of the
generic fibre~$\DS_{\eta}$. Not all the divisors we present here are defined 
over $K=\Q(s)$, some of them are defined only over an algebraic 
extension of degree $2$. The divisors we present are of three types:

\begin{itemize}
\item hyperplane section,
\item exceptional divisors coming from the resolution of the singularities of $\DS_{\eta}$,
\item pullback on $S$ of components of the pullback on $\DS_{\eta}$ of plane lines that are tritangent to the branch locus $B$.
\end{itemize}

We will denote the hyperplane class by $H$. As to the exceptional divisors, 
we have seen that the resolution of the singularities of $\DS_{\eta}$ gives rise to $5+5+3+1=14$ linearly independent divisors $E_{i,j}$.\\

Further divisors on $S$ can be obtained from the pullback of lines of
 $\P^2$ that intersect the branch locus $B$ with even multiplicity everywhere.
These pullbacks are themselves linearly equivalent to the hyperplane section, 
but they are reducible and their components are linearly independent to the 
fifteen divisors listed so far.

Let $\overline{K}$ be a fixed algebraic closure of $K=\Q (s)$,
and let $\alpha$ be a square root of $s^2-s$ inside $\overline{K}$.
We put $L:=K (\alpha)$ and consider the following lines in $\P_L^2$.
\[ 
\begin{array}{llll}
\ell_1\colon z =0,\;\;\; &\ell_2\colon z=2x,\;\;\;& \ell_3\colon z=2y,\;\;\;&\ell_4\colon z=x+y,\\[2mm]
\ell_5\colon z=\frac{x}{s+\alpha},\;\;\;&\ell_6\colon z=\frac{y}{s+\alpha},\;\;\;&\ell_7\colon z=\frac{x}{s-\alpha},\;\;\;&\ell_8\colon z=\frac{y}{s-\alpha}.\\[2mm]
\end{array}
\]

\begin{lemma}\label{l:bitangent}
Let $\ell$ be any of the lines defined above, that is, $\ell=\ell_i$ for some $i \in\{1,...,8\}$.
Then $\ell$ intersects $B$ with even multiplicity everywhere.
\end{lemma}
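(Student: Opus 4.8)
The plan is to translate the lemma into a small number of polynomial identities --- one per line --- and then to shrink that list using the available symmetries. Since $B=B_0\cup B_1$ is cut out by the sextic form $G_0G_1$, the line $\ell$ meets $B$ in a divisor of degree $6$, and ``even multiplicity everywhere'' holds exactly when the restricted binary sextic $(G_0G_1)|_\ell$ is a nonzero scalar times the square of a binary cubic. So for each $\ell_i$ I would use its defining equation to eliminate $z$ (substituting $z=0$, $z=2x$, $z=x+y$, or $z=x/(s\pm\alpha)$), write down the two binary forms $G_0|_{\ell_i}$ and $G_1|_{\ell_i}$ in the remaining two variables, and exhibit a cubic whose square is their product.

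Before carrying out any computation I would reduce the eight cases to four. The transposition $x\leftrightarrow y$ leaves both $G_0$ and $G_1$ invariant, fixes $\ell_1$ and $\ell_4$, and interchanges $\ell_2\leftrightarrow\ell_3$, $\ell_5\leftrightarrow\ell_6$, $\ell_7\leftrightarrow\ell_8$; the nontrivial element of $\mathrm{Gal}(L/K)$ sends $\alpha\mapsto-\alpha$, fixes $B$ together with $\ell_1,\dots,\ell_4$, and interchanges $\ell_5\leftrightarrow\ell_7$, $\ell_6\leftrightarrow\ell_8$. Both operations preserve intersection multiplicities, so it suffices to treat the representatives $\ell_1$, $\ell_2$, $\ell_4$ and $\ell_5$.

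For $\ell_1$ and $\ell_2$ the work is essentially free: the only $s$-dependent summand of $G_i$ carries the factor $(2x-z)(2y-z)z$, which vanishes identically on both of these lines, so $G_0$ and $G_1$ restrict to the \emph{same} cubic --- $-2xy(x+y)$ on $\ell_1$ and $2x^2(x-y)$ on $\ell_2$ --- and $(G_0G_1)|_{\ell_i}$ is literally its square. (This is no accident: $\ell_1,\ell_2,\ell_3$ are the three components of the reducible member $(2x-z)(2y-z)z=0$ of the pencil of plane cubics of which $B_0$ and $B_1$ are members.) For $\ell_4$, the line through the base points $P_3$ and $P_4$ of that pencil, I expect $G_0|_{\ell_4}$ and $G_1|_{\ell_4}$ to be the scalar multiples $-s$ and $1-s$ of $(x+y)(x-y)^2$, so that their product is $-s(1-s)\bigl((x+y)(x-y)^2\bigr)^2=\bigl(\alpha(x+y)(x-y)^2\bigr)^2$ by the defining relation $\alpha^2=s^2-s$ --- which is precisely where passing to the quadratic extension $L$ is forced on us.

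The one case carrying real content is $\ell_5$ (and with it $\ell_6,\ell_7,\ell_8$). Writing $c=s+\alpha$ and substituting $x=cz$, the restriction $G_i|_{\ell_5}$ becomes $z$ times a binary quadratic in $y$ and $z$; a priori, for $i=0$ and $i=1$ these two quadratics are neither proportional nor perfect squares, so it is not clear their product is a square. The crux is that the relation $\alpha^2=s^2-s$ collapses them, through the two identities $c^2-s(2c-1)=0$ and $c^2-(s+1)(2c-1)=1-2c$, which turn $G_1|_{\ell_5}$ into $(1-2c)\,y^2z$ and $G_0|_{\ell_5}$ into $(1-2c)\,z(z-y)^2$, so that $(G_0G_1)|_{\ell_5}=\bigl((1-2c)\,yz(z-y)\bigr)^2$. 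Verifying these two identities --- equivalently, recognising that this collapse must occur --- is the main obstacle; they are exactly what pins down the slopes $(s\pm\alpha)^{-1}$ in $\ell_5,\dots,\ell_8$, and geometrically they say that $\ell_5$ passes through the common point $P_2$ of $B_0$ and $B_1$ and is tangent to each of $B_0$ and $B_1$ at one further point. Once these four representatives are settled, the lemma follows by the symmetry reduction above.
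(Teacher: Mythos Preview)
Your proof is correct and follows the same approach as the paper---substitute the linear relation defining $\ell_i$ into $G_0G_1$ and verify that the resulting binary sextic is a perfect square---but the paper simply states this recipe and leaves all computation to the reader, whereas you carry it out in full and add the symmetry reduction via $x\leftrightarrow y$ and $\alpha\mapsto-\alpha$ (together with the pencil-of-cubics interpretation for $\ell_1,\ell_2,\ell_3$), none of which appears in the paper. Your explicit squares also agree with the components $L_i$ the paper writes down immediately after the lemma.
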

\begin{proof}
Notice that in the equation of any of the lines $\ell_i$, one can express $z$ in terms of $x$ or $y$.
Then, in order to prove the statement, it is enough to substitute $z$ with the expression in terms of the other variables in the equation of $B$, and check that in this way one obtains a polynomial that is a square in $\overline{L}[x,y]$.
\end{proof}

\begin{proposition}\label{p:PullBack}
Let $\ell_i$ be one of the lines defined above,
and let $D_i:=\pi^{-1}(\ell_i)$ denote the pullback of $\ell_i$ on $\DS_{\eta}$.
Then $D_i$ splits into two components, $L_i$ and $L'_i$,
both isomorphic to $\ell_i$.
\end{proposition}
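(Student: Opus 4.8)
The plan is to restrict the branched-cover description of $\DS_\eta$ to the line $\ell_i$ and to read the splitting straight off Lemma~\ref{l:bitangent}, which already carries all the content. Recall that $\DS_\eta\subset\P(1,1,1,3)$ is defined by $w^2=G_0G_1$ and that $\pi\colon\DS_\eta\to\P^2$ forgets the weight-$3$ coordinate $w$, so $D_i=\pi^{-1}(\ell_i)$ is the locus of $\DS_\eta$ lying over $\ell_i$. First I would make the restriction explicit: the equation of $\ell_i$ writes one of $x,y,z$ as a scalar multiple of another, so $\ell_i$ is parametrised by the two remaining homogeneous coordinates, and over $\ell_i\cong\P^1$ the subscheme $D_i$ is cut out by $w^2=(G_0G_1)|_{\ell_i}$, a homogeneous form of degree $6$ in those coordinates. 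By Lemma~\ref{l:bitangent} this sextic is a perfect square $h_i^2$, with $h_i$ a homogeneous cubic over $\overline{L}$.

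The second step is to factor $w^2-h_i^2=(w-h_i)(w+h_i)$. In characteristic $0$ the two factors are coprime outside $\{w=h_i=0\}$---their sum is $2w$, their difference $2h_i$---so provided $h_i\not\equiv 0$ the scheme $D_i$ is reduced and splits as $L_i\cup L_i'$ with $L_i=\{w=h_i\}$ and $L_i'=\{w=-h_i\}$, two distinct curves. That $h_i\not\equiv 0$ is immediate: if $(G_0G_1)|_{\ell_i}$ vanished identically, $\ell_i$ would be a component of the branch curve $B=B_0\cup B_1$, impossible since $B_0$ and $B_1$ are irreducible cubics and contain no line.

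The third step identifies each component with $\ell_i$. Regarding $h_i$ as a cubic form in $x,y,z$ along $\ell_i$, the map $(x:y:z)\mapsto(x:y:z:h_i(x,y,z))$ has the correct weights and lands on $\DS_\eta$ by the previous step, so it is a section of $\pi$ over $\ell_i$ with image exactly $L_i$; likewise $-h_i$ gives a section with image $L_i'$. Since a section of a morphism is an isomorphism onto its image, $L_i\cong\ell_i\cong\P^1$ and $L_i'\cong\ell_i\cong\P^1$. In particular this remains true when $\ell_i$ runs through one of the singular points $Q_1,\dots,Q_4$ of $\DS_\eta$, as several of the lines do: the identification of $L_i$ with $\ell_i$ does not see the singularities of the ambient surface.

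I do not expect a genuine obstacle here: once Lemma~\ref{l:bitangent} is in hand, the argument is essentially bookkeeping with the weighted-projective and Cremona set-up. The one point that needs care is the reducedness of $D_i$, i.e.\ distinguishing an honest splitting $L_i\cup L_i'$ from a doubled line $2L_i$; this is exactly what the condition $h_i\not\equiv 0$, equivalently $\ell_i\not\subset B$, rules out.
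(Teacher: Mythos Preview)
Your argument is correct and is precisely the unpacking of what the paper means by ``obvious in view of Lemma~\ref{l:bitangent}'': the paper's own proof is a one-line appeal to that lemma together with a pointer to \cite[Proposition~1.2.26]{festi} for the routine details, and you have supplied exactly those details---restricting $w^2=G_0G_1$ to $\ell_i$, factoring $w^2-h_i^2$, ruling out $h_i\equiv 0$ via irreducibility of the smooth cubics $B_0,B_1$, and exhibiting the two components as images of sections. There is no difference in approach, only in level of explicitness.
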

\begin{proof}
It is obvious in view of Lemma~\ref{l:bitangent}.
For more details see, for example,~\cite[Proposition 1.2.26]{festi}.
\end{proof}

For every line $\ell_i$ we explicitly fix a component of the pullback on $\DS_{\eta}$, 
by writing down the explicit equations.
\[
\begin{array}{ll}
L_1 \colon z=0\; , \;\; w=2xy(x+y) & L_2 \colon z=2x\; , \;\; w=2x^2(x-y)\\[2mm]
L_3 \colon z=2y\; , \;\; w=2y^2(y-x)&L_4  \colon z=x+y\; , \;\; w=\alpha (x-y)^2(x+y)\\[2mm]
L_5 \colon z=\frac{x}{s+\alpha}\; , \;\; w=xy(x-(\alpha+s)y)/s &L_6 \colon z=\frac{y}{s+\alpha}\; , \;\; w=xy(y-(\alpha+s)x)/s\\[2mm]
L_7  \colon z=\frac{x}{s-\alpha}\; , \;\; w=xy(x+(\alpha-s)y)/s & L_8  \colon z=\frac{y}{s-\alpha}\; , \;\; w=xy(y+(\alpha-s)x)/s\\[2mm]
\end{array}
\]

\section{Intersection numbers}
These eight divisors, together with the fifteen aforementioned ones, 
give us a list of twenty-three divisors. The next step is to compute 
the intersection numbers of these divisors.

\begin{lemma}\label{l:IntH}
We have the following values for the intersection numbers of $H$:

\begin{enumerate}
\item $H^2=2$;
\item $H\cdot E_{i,j}=0$, for every $i,j$;
\item $H\cdot L_i=1$, for every $i$.
\end{enumerate}
\end{lemma}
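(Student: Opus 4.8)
The plan is to reduce all three identities to the compatibility of intersection numbers with pullback along the composite morphism
\[
\phi\colon S \xrightarrow{\ \pi\ } \DS_{\eta} \xrightarrow{\ f\ } \P^2,
\]
where $f$ is the degree-$2$ projection $(x:y:z:w)\mapsto(x:y:z)$ induced by the embedding $\DS_{\eta}\subset\P(1,1,1,3)$ and $\pi$ is the minimal resolution. The first thing to check is that $\phi$ is a genuine morphism of degree $2$: the only base point of the weighted projection is $(0:0:0:1)$, which does not lie on $\DS_{\eta}$ — there $w^{2}=1$ while $G_0\cdot G_1=0$ — so $f$ is everywhere defined, and $\pi$ is a proper birational morphism; hence $\deg\phi=2$. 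By construction the hyperplane class is $H=\phi^{*}h$ with $h=c_1(\mathcal{O}_{\P^2}(1))$ and $h^{2}=1$, and since $H$ depends only on its linear equivalence class I may represent it by $\phi^{*}\ell$ for a line $\ell\subset\P^2$ chosen conveniently in each case.

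Part (1) is then immediate from the standard fact that for a generically finite surjective morphism of smooth projective surfaces one has $\phi^{*}D\cdot\phi^{*}D'=\deg(\phi)\,(D\cdot D')$; taking $D=D'=h$ gives $H^{2}=2\cdot 1=2$. For part (2), each exceptional divisor $E_{i,j}$ is contracted by $\pi$ to the point $Q_i$, so $\phi(E_{i,j})$ is the single point $P_i\in\P^2$; choosing $\ell$ with $P_i\notin\ell$, the effective divisor $\phi^{*}\ell$ is supported away from $E_{i,j}$, and therefore $H\cdot E_{i,j}=\phi^{*}\ell\cdot E_{i,j}=0$.

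For part (3), Proposition~\ref{p:PullBack} says that $f$ restricts to an isomorphism $L_i\xrightarrow{\ \sim\ }\ell_i$; passing to the resolution only replaces $L_i$ by its strict transform, on which $\pi$ induces a birational morphism, so the composite $\phi|_{L_i}\colon L_i\to\ell_i$ still has degree $1$ and thus $\phi_{*}L_i=[\ell_i]$. Choosing a general line $\ell$ we obtain
\[
H\cdot L_i=\phi^{*}\ell\cdot L_i=\ell\cdot\phi_{*}L_i=\ell\cdot\ell_i=1.
\]
All three computations are elementary once $\phi$ is set up; the only point requiring any care — and the nearest thing to an obstacle — is the bookkeeping through the resolution $\pi$, namely checking that each $L_i$ survives to a curve mapping with degree $1$ onto $\ell_i$ (which holds because $\pi$ is an isomorphism outside finitely many points) and that no $E_{i,j}$ dominates a curve of $\P^2$ (which holds because $\pi$ contracts it to a point).
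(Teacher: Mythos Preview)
Your proof is correct and follows essentially the same approach as the paper's own argument: in each case one represents $H$ by the pullback of a suitably chosen line and reads off the intersection number from the double-cover structure. The only minor variation is in part~(3): the paper argues that $\phi^{*}\ell_i$ splits as $L_i+L_i'$ (plus exceptional classes, which are orthogonal to $H$ by part~(2)), so $H\cdot(L_i+L_i')=H^2=2$, and then appeals to the deck involution swapping $L_i$ and $L_i'$ to conclude each summand equals~$1$; you instead invoke the projection formula $\phi^{*}\ell\cdot L_i=\ell\cdot\phi_{*}L_i$ directly, which is a cleaner route and avoids the symmetry step.
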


\begin{proof}
\begin{enumerate}
\item By definition, $H$ is the pullback of a line in the plane.
Two lines in the projective plane always meet in one point.
As $\DS_{\eta}$ is a double cover of the plane, the pullbacks of the lines will 
meet in two points. As $S$ is birational to $\DS_{\eta}$ and we considered two 
general lines, the statement holds on $S$ as well. Hence, using the fact 
that lines in the plane are all linearly equivalent, we conclude that $H^2=2$.
\item One can choose a line in $\P^2$ not passing through $P_i$, for $i=1,...,4$.
The statement follows.
\item Two lines on the plane always meet in one point, 
hence the pullback of $\ell_i$ meets $H$ in two points. But this pullback splits into 
two isomorphic components, $L_i$ and $L'_i$, each of them intersecting $H$ in one point. 
\end{enumerate}
\end{proof}

\begin{lemma}\label{l:IntE}

The intersection numbers of the exceptional divisors are as follows:

\begin{enumerate}
\item $E_{i,j}^2=-2$;
\item $E_{i,j}\cdot E_{i,j'}=1$ (for $j \neq j'$) if and only if $|j-j'|=1$, 
$E_{i,j}\cdot E_{i,j'}=0$ otherwise;
\item $E_{i,j}\cdot E_{i',j'}=0$, for every $i\neq i'$.
\end{enumerate}
\end{lemma}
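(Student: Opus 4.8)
The plan is to obtain all three statements from the explicit description of the minimal resolution $\pi\colon S\to\DS_\eta$ worked out above, together with the fact that $S$ is a K3 surface.

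I would begin with~(1). Since $S$ is a K3 surface, the canonical class $K_S$ is trivial, so adjunction reads $C^2 = 2g(C)-2$ for every smooth irreducible curve $C\subset S$. Each exceptional divisor $E_{i,j}$ has been identified above with a copy of $\P^1$: it is either one of the two components of an unramified double cover of an exceptional line $D_{i,\bullet}$, or the double cover of such a line branched over two points. In either case $g(E_{i,j})=0$, and hence $E_{i,j}^2=-2$.

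For~(2) I would use that $\DS_\eta$ has exactly the four singular points $Q_1,\dots,Q_4$, of types $A_5,A_5,A_3,A_1$, and that $\pi$ is their minimal resolution. The minimal resolution of a Du Val singularity of type $A_k$ is, by definition, a chain of $k$ smooth rational $(-2)$-curves $C_1,\dots,C_k$ with $C_m\cdot C_{m+1}=1$ and $C_m\cdot C_{m'}=0$ whenever $|m-m'|\ge 2$ (see~\cite{BPHvdV}). The labels were fixed above so that, for each $i$, the curve $E_{i,0}$ is the middle member of the chain over $Q_i$ and increasing $|j|$ by one moves one step outward along the chain; rewriting the chain condition in terms of these labels yields precisely $E_{i,j}\cdot E_{i,j'}=1$ when $|j-j'|=1$ and $0$ otherwise. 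As a cross-check one can also read the configuration off the blow-up picture itself: on the successively blown-up plane the exceptional lines $D_{i,\bullet}$ form a chain, and applying Riemann--Hurwitz to the double cover restricted to each of them shows which split into two components and which do not, producing exactly the stated $A_{k_i}$-configuration, with the $\pm$-pairs arising from the split lines.

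Statement~(3) is then immediate: the points $P_1,\dots,P_4$ are four distinct points of $\P^2$, so $Q_1,\dots,Q_4$ are distinct points of $\DS_\eta$, and the resolution over $Q_i$ consists of blow-ups supported over $Q_i$ alone. Thus, for $i\neq i'$, the divisors $E_{i,j}$ and $E_{i',j'}$ have disjoint support, whence $E_{i,j}\cdot E_{i',j'}=0$.

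I expect the only subtlety to be the bookkeeping in~(2): one must carry out the Riemann--Hurwitz computation carefully enough to be certain that each $A_{2n-1}$ singularity of the branch curve produces a single $A_{2n-1}$ chain on $S$, rather than some other configuration with the same number of components. As this is the standard local model for resolving an $A_k$ singularity of a double cover, it is a matter of care rather than a genuine difficulty.
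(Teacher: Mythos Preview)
Your proof is correct and follows essentially the same approach as the paper: adjunction on a K3 for~(1), the chain structure of the $A_k$ resolution encoded in the labeling for~(2), and disjoint support over distinct singular points for~(3). The paper's own argument is merely a terser version of yours, in particular reducing~(2) to the single sentence ``follows directly from the labeling of the components $E_{i,j}$''.
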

\begin{proof}
\begin{enumerate}
\item By construction, every $E_{i,j}$ has genus $0$.
Since $S$ is a K3 surface, its canonical divisor $K$ is trivial.
Then the result follows by the adjunction formula (cf. \cite[Proposition V.1.5]{hartshorne}).
\item Follows directly from the labeling of the components $E_{i,j}$.
\item If $i\neq i'$, it means that the divisors lie above distinct points of $\DS_{\eta}$, and therefore they do not intersect.
\end{enumerate}
\end{proof}

\begin{lemma}\label{l:IntL}
We have the following intersection matrix for $L_1,...,L_8$.
$$
\begin{pmatrix}
-2 & 0 & 0 & 0 & 0 & 0 & 0 & 0\\
0 & -2 & 0 & 0 & 0 & 0 & 0 & 0\\
0 & 0 & -2 & 0 & 0 & 0 & 0 & 0\\
0 & 0 & 0 & -2 & 0 & 1 & 0 & 1\\
0 & 0 & 0 & 0 & -2 & 0 & 1 & 0\\
0 & 0 & 0 & 1 & 0 & -2 & 0 & 1\\
0 & 0 & 0 & 0 & 1 & 0 & -2 & 0\\
0 & 0 & 0 & 1 & 0 & 1 & 0 & -2\\
\end{pmatrix}
$$
\end{lemma}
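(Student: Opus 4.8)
The plan is to compute all $\binom{8}{2}$ pairwise intersection numbers $L_i \cdot L_j$ together with the eight self-intersections $L_i^2$, using the explicit equations for the chosen components $L_i$ listed just before this lemma. The self-intersections are handled first and uniformly: each $L_i$ is isomorphic to the line $\ell_i \cong \P^1$, so it has genus $0$, and since $S$ is a K3 surface with trivial canonical class, the adjunction formula gives $L_i^2 = -2$ for all $i$, exactly as in the proof of Lemma~\ref{l:IntE}(1). This accounts for the diagonal of the matrix.

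For the off-diagonal entries, the key geometric principle is that two components $L_i, L_j$ coming from \emph{distinct} plane lines $\ell_i \neq \ell_j$ can only meet in the fibre over the single point $\ell_i \cap \ell_j \in \P^2$. Over a point of $\P^2$ not on the branch curve $B$ the double cover is two points, so $L_i$ and $L_j$ meet there at most once and only if the chosen sheets agree; over a branch point the fibre is a single point. Concretely, I would for each pair substitute the parametrisation of $\ell_i \cap \ell_j$ into the equations $z = \cdots$, $w = \cdots$ defining $L_i$ and $L_j$ and check whether the two points of $\DS_\eta$ (given by the sign of $w$) coincide; when they do, one must further check on the resolution $S$ whether the intersection is transverse (contributing $1$) or whether the point was a singular point of $\DS_\eta$ blown up in the resolution, in which case the strict transforms may be separated (contributing $0$). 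The lines $\ell_1,\ell_2,\ell_3$ pass through the singular points $P_1=(1:0:0)$ and $P_2=(0:1:0)$ of $B$, which is why their corresponding components end up orthogonal to everything after resolution, explaining the first three rows and columns of zeros.

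The remaining work is the $5 \times 5$ block on $\ell_4,\dots,\ell_8$. Here $\ell_4$ is the line $z = x+y$ through $P_3 = (1:1:2)$, while $\ell_5,\ell_6,\ell_7,\ell_8$ are the four $\alpha$-dependent lines through $P_1$ or $P_2$; the pattern of $1$'s in the matrix records precisely which of the pairwise plane intersection points lie off $B$ with matching $w$-sheets and give a transverse crossing on $S$. For instance $L_4 \cap L_6 = 1$ and $L_4 \cap L_8 = 1$ but $L_4 \cap L_5 = L_4 \cap L_7 = 0$, which I expect reflects that $\ell_4 \cap \ell_5$ and $\ell_4 \cap \ell_7$ land on the branch curve or on a resolved singularity while $\ell_4 \cap \ell_6$ and $\ell_4 \cap \ell_8$ do not; similarly $L_5 \cap L_7 = 1$ and $L_6 \cap L_8 = 1$ come from the two lines through $P_1$ (resp. $P_2$) meeting only at $P_1$ (resp. $P_2$), with the $w$-values chosen in the table matching there. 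Each such entry is verified by a direct substitution into the displayed equations for $L_i$.

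The main obstacle is bookkeeping rather than conceptual: one must carefully track, for every pair whose lines meet away from the obvious base locus, \emph{which} of the two sheets the fixed components $L_i$ and $L_j$ (as written down explicitly above) actually lie on at the common point, since choosing the other component $L'_j$ would flip a $1$ to a $0$. A secondary subtlety is the lines through $P_1$, $P_2$, $P_3$: there the naive count on $\DS_\eta$ must be corrected by examining the strict transforms on the blown-up surface $S$, using that $\ell_i$ is smooth and meets the resolved exceptional chain in a controlled way. Once these sheet-matchings and resolution corrections are recorded, reading off the matrix is immediate; I would organise the computation as a single table listing $\ell_i \cap \ell_j$, whether it lies on $B$, and the sign comparison of $w$, from which the stated matrix follows.
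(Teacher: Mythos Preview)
Your approach is essentially the paper's: adjunction gives $L_i^2=-2$, and the off-diagonal entries are obtained by locating $\ell_i\cap\ell_j$ in $\P^2$, comparing the chosen $w$-branches at that point, and correcting for the resolution when the intersection lands on one of the $P_k$. The paper states exactly this, delegating the actual case-by-case check to the accompanying {\tt MAGMA} code and recording the shortcut that $\ell_i\cap\ell_j\in\{P_1,\dots,P_4\}$ forces $L_i\cdot L_j=0$ on $S$.

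One caution on your heuristics for the $5\times 5$ block. You explain $L_5\cdot L_7=1$ and $L_6\cdot L_8=1$ by saying the corresponding lines meet only at $P_1$ (resp.\ $P_2$) with matching $w$-values. But $\ell_5,\ell_7$ (the two lines $(s\pm\alpha)z=x$) meet at $(0{:}1{:}0)=P_2$, and $\ell_6,\ell_8$ meet at $P_1$; in either case the common point is a singular point of the branch curve, so your ``matching $w$'' test is not meaningful there (both $w$-values vanish) and the number must come from the local analysis on the resolution, not from a sheet comparison. This is precisely the ``secondary subtlety'' you already flagged, and it is the one place where a naive reading of the paper's shortcut (meeting at a $P_k$ gives $0$) would also mislead; the genuine justification for those entries is the explicit computation, not either heuristic. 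So your plan is right, but for those four entries you should carry out the blow-up analysis rather than the $w$-comparison.
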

\begin{proof}
By Proposition~\ref{p:PullBack}, $L_i$ is a genus $0$ curve on the K3 surface $S$
we have $L_i^2=-2$ by the adjunction formula.
The intersection numbers $L_i\cdot L_j$ for $i\neq j$ can be explicitly computed on $\DS_{\eta}$ (see the accompanying {\tt MAGMA} code \cite{FvS18}), 
using their defining equations. Also, we already know that if $\ell_i$ and~$\ell_j$ meet 
in any of the points $P_1,...,P_4$, then $L_i\cdot L_j=0$.
\end{proof}

The intersection numbers $L_i\cdot E_{j,k}$ are slightly more difficult to determine, 
as the exceptional divisors $E_{j,k}$ only live on $S$. The following partial 
determination of these intersections will be sufficient for our purposes.\\

\begin{lemma}\label{l:IntEL}
The following statements hold.
\begin{enumerate}
\item Each of the divisors $L_1, L_7, L_8$ intersect either $E_{1,2}$ or $E_{1,-2}$;
they do not intersect the other exceptional divisors above $Q_1$.
\item Each of the divisors $L_1, L_5, L_6$ intersect either $E_{2,2}$ or $E_{2,-2}$;
they do not intersect the other exceptional divisors above $Q_2$.
\item $L_2$ intersects either $E_{2,2}$ or $E_{2,-2}$;
it does not intersect the other exceptional divisors above $Q_2$;
it does intersect either $E_{3,1}$ or $E_{3,-1}$;
it does not intersect $E_{3,0}$ nor any exceptional divisor above $Q_1$.
\item $L_3$ intersects either $E_{1,2}$ or $E_{1,-2}$;
it does not intersect the other exceptional divisors above $Q_1$;
it does intersect either $E_{3,1}$ or $E_{3,-1}$;
it does not intersect $E_{3,0}$ nor any exceptional divisor above $Q_2$.
\item $L_4$ intersect $E_{3,0}$ and $E_{4,0}$;
it does not intersect any other exceptional divisor.
\item $E_{4,0}$ intersects $L_1$ and $L_4$;
it does not intersect any other of the $L_i$'s.
\item $L_1,L_5,...,L_8$ do not intersect any exceptional divisor above $Q_3$.
\item $L_5,L_6$ do not intersect any exceptional divisor above $Q_1$.
\item $L_7,L_8$ do not intersect any exceptional divisor above $Q_2$.
\end{enumerate}
\end{lemma}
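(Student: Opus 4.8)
The plan is to reduce every clause of the lemma to two ingredients: the \emph{incidence-and-contact pattern} of each line $\ell_i$ with $B_0,B_1$ at the four points $P_1,\dots,P_4$, which is pure plane geometry, and, for each incidence, a \emph{local study} of the resolution $S\to\DS_{\eta}$ near the corresponding point $Q_j$.

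First I would settle the global part. For each $i$ one substitutes the linear equation of $\ell_i$ into the cubics $G_0,G_1$ and factors the two resulting binary forms; this displays exactly which of $P_1,\dots,P_4$ lies on $\ell_i$ (those at which both restrictions vanish) together with the contact orders $t_0=\intm_{P_j}(\ell_i,B_0)$ and $t_1=\intm_{P_j}(\ell_i,B_1)$. By Proposition~\ref{p:PullBack} one must have $t_0=t_1$ for the pullback of $\ell_i$ to split, so write $t=t_0=t_1$; moreover $t\le m_j$, where $m_j\in\{1,2,3\}$ is the contact order of $B_0$ with $B_1$ at $P_j$, so that $\DS_{\eta}$ has an $A_{2m_j-1}$ point at $Q_j$. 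Since a component of $\pi^{-1}(\ell_i)$ passes through $Q_j$ only when $P_j\in\ell_i$, and since a line meeting none of $P_1,\dots,P_4$ meets no exceptional divisor (choose it as in the proof of Lemma~\ref{l:IntH}(2)), this already establishes every ``does not intersect'' clause concerning a point off the relevant line: all of (7), (8), (9), the last clauses of (3), (4) and (5), and, because among the eight lines only $\ell_1$ and $\ell_4$ pass through $P_4$, also (6).

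The local part is the crux. Fix $P_j\in\ell_i$ and set $m=m_j$; resolving the $A_{2m-1}$ point $Q_j$ requires $m$ successive blow-ups of the base, with exceptional curves $D^{(1)},\dots,D^{(m)}$, and in the notation of Section~2 the divisor $E_{j,\pm k}$ for $1\le k\le m-1$ is the (split) double cover of $D^{(m-k)}$, while $E_{j,0}$ is the connected cover of $D^{(m)}$. One first brings the triple $(\ell_i,B_0,B_1)$ into a local normal form: in analytic coordinates $(u,v)$ near $P_j$ with $\ell_i=\{u=0\}$ one has $B_0=\{u=a_0v^{t}+\cdots\}$, $B_1=\{u=a_1v^{t}+\cdots\}$ with $a_0,a_1\ne 0$ (and $a_0=a_1$ precisely when $t<m$), so near $Q_j$ the surface is $w^{2}=(u-a_0v^{t}-\cdots)(u-a_1v^{t}-\cdots)$, and the chosen component $L_i$ is the curve $\{u=0,\ w=\sqrt{a_0a_1}\,v^{t}+\cdots\}$. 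A short induction over the blow-up tower then shows that the strict transform of $L_i$ passes through the infinitely near singular point created by each of the first $t-1$ blow-ups and, at the $t$-th one, meets the cover of $D^{(t)}$ transversally in a single reduced point lying off any remaining singular locus, so that the later blow-ups $D^{(t+1)},\dots,D^{(m)}$ (if any) leave it unchanged; hence $L_i$ meets exactly one of $E_{j,m-t}$ and $E_{j,-(m-t)}$ (read $E_{j,0}$ when $t=m$) and no other exceptional curve over $Q_j$. Inserting the contact orders from the global step now produces the remaining assertions; for example $\ell_1$ is transverse ($t=1$) to $B_0$ and $B_1$ at $P_1$ and $P_2$, where $m=3$, so $L_1$ meets one of $E_{1,\pm2}$ and one of $E_{2,\pm2}$, while at $P_4$ (there $t=m=1$) it meets $E_{4,0}$; $\ell_4$ has contact $t=2=m$ with $B_0,B_1$ at $P_3$ and $t=1=m$ at $P_4$, so $L_4$ meets exactly $E_{3,0}$ and $E_{4,0}$; and $\ell_2,\ell_3$ have contact $t=1<m=2$ at $P_3$, so $L_2,L_3$ meet one of $E_{3,\pm1}$ but not $E_{3,0}$. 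Which of $E_{j,k}$, $E_{j,-k}$ is met is then fixed by the explicit equations chosen for $L_i$ in Section~4 -- the labelling $E_{j,k}\leftrightarrow E_{j,-k}$ being defined only up to the deck involution -- but only the stated ``one of the two'' alternative is needed later; all of these numerical checks (and the sign choices) also run in the accompanying \texttt{MAGMA} code \cite{FvS18}.

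The genuinely hard step is the local induction, essentially for a structural reason: the curves $E_{j,k}$ do not live on $\DS_{\eta}\subset\P(1,1,1,3)$, only on the smooth model $S$, so the numbers $L_i\cdot E_{j,k}$ cannot be read off from defining equations, and one must actually carry out the resolution and track when the strict transform of $L_i$ ceases to pass through the successive infinitely near base points. Once the local normal form at each $P_j$ is secured -- and in particular once one has checked that the contact orders of $\ell_i$ with $B_0$ and with $B_1$ really coincide, as required by the splitting in Proposition~\ref{p:PullBack} -- what is left is a finite and completely explicit computation.
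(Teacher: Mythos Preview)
Your plan coincides with the paper's: both arguments (i) read off, for each $\ell_i$, which of $P_1,\dots,P_4$ it contains and with what order of contact to $B_0,B_1$, and then (ii) follow the strict transform of $L_i$ through the blow-up tower to see on which $D_{j,k}$ it finally sits. The paper carries out (i)+(ii) only for item~(1), where the three lines are transverse to $B_0,B_1$ at $P_1$, and declares the rest ``analogous''; you make the mechanism explicit and package it as the clean rule that contact order $t$ at an $A_{2m-1}$ point forces $L_i$ to meet exactly one of $E_{j,\pm(m-t)}$. So the approach is the same, yours being the more articulated version.

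There is one place where your write-up is too quick, and it is exactly where the paper's ``analogous'' also hides something. You assert that ``inserting the contact orders from the global step now produces the remaining assertions'', but for items~(3) and~(4) a direct check does not give what is written. The line $\ell_2\colon z=2x$ is in fact the \emph{common tangent} to $B_0$ and $B_1$ at $P_2$: in the chart $y=1$ the linear part of $G_i(x,1,z)$ at $(0,0)$ is $z-2x$, and indeed $G_i|_{\ell_2}=2x^2(x-1)$, so the contact order there is $t=2$, not $1$. Your own rule then yields $E_{2,\pm(3-2)}=E_{2,\pm1}$, not the $E_{2,\pm2}$ asserted in~(3); the symmetric remark applies to $\ell_3$ at $P_1$ in~(4). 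Thus these cases are not ``analogous'' to item~(1) in the sense your summary sentence (and the paper's one-line proof) suggests, and an honest execution of your method does not reproduce those two clauses as stated. You should either flag this discrepancy or supply the extra argument; note that the downstream use in Proposition~\ref{p:Lambda} is insulated from it, since there one anyway runs over all admissible $\{0,1\}$ assignments for the undetermined pairs.
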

\begin{proof}

The statements can easily be proven by case by case analysis.
As an example, we provide the argument for the first statement.
The others are analogous.
 
The lines $\ell_1, \ell_7, \ell_8$ intersect at the point $P_1$.
This implies that $L_1, L_2, L_8$ intersect the exceptional divisor lying above $Q_1$.
As none of these lines is tangent to any of the irreducible components of $B$ in $P_1$, 
we have that they get separated from the branch locus already after the first blow up,
i.e., they intersect $D_{1,2}$.
The statement follows from the definition of $E_{1,\pm 2}$.
\end{proof}

\section{The geometric Picard lattice of $S$}\label{s:PicS}

We let $\overline{S}=\overline{K} \otimes_K S$ and denote by $\Lambda$  the sublattice of $\Pic \overline{S}$ generated by the divisors $H, L_i, E_{j,k}$.

\begin{proposition}\label{p:Lambda}
The lattice $\Lambda$ has rank $19$, signature $(1,18)$ and its discriminant group 
is isomorphic to $\Z/12\Z$.
\end{proposition}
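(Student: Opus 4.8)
The plan is to exhibit an explicit Gram matrix for the $23$ divisors $H, L_1,\dots,L_8, E_{j,k}$ with respect to the intersection pairing, and then extract from it a rank-$19$ sublattice of the right discriminant. First I would assemble the full $23\times 23$ intersection matrix: Lemma~\ref{l:IntH} gives the row/column of $H$, Lemma~\ref{l:IntE} gives the $14\times 14$ block of the exceptional divisors (a direct sum $A_5\oplus A_5\oplus A_3\oplus A_1$, up to sign), Lemma~\ref{l:IntL} gives the $8\times 8$ block of the $L_i$, and Lemma~\ref{l:IntEL} gives the mixed $L_i\cdot E_{j,k}$ entries. The one subtlety is that Lemma~\ref{l:IntEL} only determines, for each relevant pair, that $L_i$ meets \emph{one} of $E_{j,2}$ or $E_{j,-2}$ (resp. $E_{j,\pm1}$), not which one. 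I would fix this by choosing, compatibly, a labelling of the two branches over each singular point: since renaming $E_{j,k}\leftrightarrow E_{j,-k}$ is a lattice automorphism, the isomorphism type of $\Lambda$ does not depend on the choice, so I may simply declare $L_1$ to meet $E_{1,2}$ and $E_{2,2}$, then propagate the remaining choices consistently through statements (1)--(9) of Lemma~\ref{l:IntEL}.

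Next I would compute the rank and signature of this $23\times 23$ matrix. A priori the $23$ divisors are linearly dependent; the pullbacks $D_i=L_i+L_i'$ are all linearly equivalent to $H$, which together with the relation expressing each $L_i'$ in terms of $H$ and the $E_{j,k}$ forces several linear relations among the classes, so the rank will drop to $19$. Concretely I would row-reduce the Gram matrix (over $\Q$) to read off that it has rank $19$; since $S$ is a K3 surface the Hodge index theorem forces the signature of any sublattice of $\Pic\overline S$ containing an ample-type class to be $(1,r-1)$, and as $H^2=2>0$ the signature is $(1,18)$. Then I would pick an explicit set of $19$ of the $23$ classes spanning $\Lambda$ — for instance $H$, all $14$ exceptional divisors, and $L_4,L_5,L_6,L_7$ (one checks these four $L_i$ are independent modulo the span of $H$ and the $E_{j,k}$) — and compute the determinant of the corresponding $19\times 19$ Gram submatrix.

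The discriminant computation is where the real content lies. Rather than just read off $|\det|=12$, I would determine the discriminant \emph{group} $\Lambda^\vee/\Lambda$ and show it is cyclic. The cleanest route: the sublattice generated by $H$ and the fourteen $E_{j,k}$ is an orthogonal direct sum $\langle 2\rangle\oplus A_5\oplus A_5\oplus A_3\oplus A_1$ (using $H\cdot E_{j,k}=0$), with discriminant group $\Z/2\oplus(\Z/6)^2\oplus\Z/4\oplus\Z/2$; adjoining the four classes $L_4,L_5,L_6,L_7$ and their known intersections with the generators imposes exactly four glue relations, and I would track how each new $L_i$ (via the dual vector it represents) collapses this group. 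The expected outcome is that the glue vectors conspire so that the quotient becomes cyclic of order $12$; I would verify this by computing the Smith normal form of the $19\times19$ Gram matrix, whose invariant factors I expect to be $(1,\dots,1,1,12)$ — equivalently $(1,\dots,1,2,6)$ reorganised, giving $\Z/2\oplus\Z/6\cong\Z/12$ since $\gcd(2,6)=2$... the point being that the Smith normal form, not merely the determinant, is what pins down the group, and I would present that computation (it can be done by hand or checked against the accompanying {\tt MAGMA} code \cite{FvS18}).

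The main obstacle is the bookkeeping in the last step: making the glue relations fully explicit requires knowing each $L_i\cdot E_{j,k}$ precisely, and Lemma~\ref{l:IntEL} leaves some of these ambiguous (only ``$E_{j,2}$ or $E_{j,-2}$''). I expect this to be resolvable because the ambiguity is precisely by lattice automorphisms of the $A_5$, $A_3$ summands, so any consistent choice yields the same discriminant form; but one must check that a \emph{consistent} global choice exists, i.e. that the constraints in parts (1)--(9) of Lemma~\ref{l:IntEL} are not contradictory. A short graph-colouring argument — the relevant incidences form a forest once the two branches over each point are treated as colours — shows consistency, and then the Smith normal form calculation delivers $\Z/12\Z$.
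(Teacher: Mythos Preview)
Your approach has a genuine gap in how you handle the undetermined intersection numbers $L_i\cdot E_{j,k}$. You claim that the residual ambiguity in Lemma~\ref{l:IntEL} is ``precisely by lattice automorphisms of the $A_5$, $A_3$ summands, so any consistent choice yields the same discriminant form.'' This is not correct. Once you have fixed, say, $E_{1,2}$ to be the extremal component hit by $L_1$, the flip $E_{1,k}\leftrightarrow E_{1,-k}$ is no longer available (it would move $L_1$'s intersection). Whether $L_7$ then meets $E_{1,2}$ or $E_{1,-2}$ --- i.e.\ whether $L_1$ and $L_7$ touch the \emph{same} end of the $A_5$ chain or \emph{opposite} ends --- is a genuine piece of geometry that Lemma~\ref{l:IntEL} does not supply, and the two possibilities are not related by any automorphism respecting the already-fixed data. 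The same issue arises for $L_3,L_8$ above $Q_1$, for $L_2,L_5,L_6$ above $Q_2$, and for $L_3$ above $Q_3$. Your ``graph-colouring'' remark addresses only whether \emph{some} assignment is internally consistent, not which assignment actually holds on $S$; and there is no reason different assignments should produce isometric lattices (indeed most of them do not even produce the correct rank).

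The paper deals with this as follows. After fixing three of the labels via $L_1$ and $L_2$, there remain seven binary choices. One then runs through all $2^7$ possible Gram matrices and discards those of rank larger than $20$, since $\Lambda$ sits inside $\Pic(\overline S)$ and the latter has rank at most $20$ by the Lefschetz $(1,1)$-theorem. Exactly four of the $128$ combinations survive this test, and for each of those four one computes the rank, signature and discriminant group and finds the values stated. So the rank bound for K3 surfaces is used as an \emph{input} to pin down the admissible intersection data, not just as a sanity check afterwards. Your argument, as written, never invokes this constraint and therefore cannot single out a Gram matrix (or a small finite list of candidates) to feed into the Smith normal form computation.

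A small side remark: your parenthetical ``$(1,\dots,1,2,6)$ reorganised, giving $\Z/2\oplus\Z/6\cong\Z/12$'' is false, since $\gcd(2,6)\neq 1$; the invariant factors must read $(1,\dots,1,12)$ for the group to be cyclic. You do say you would rely on the Smith normal form rather than the determinant, which is the right instinct, but the slip should be corrected.
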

\begin{proof}

Lemma~\ref{l:IntEL} determines some of the intersection numbers $L_i\cdot E_{j,k}$, but not all of them.
In fact, the intersection numbers concerning the exceptional divisors above $Q_1$ and $Q_2$ are only determined up to the double cover involution.
The same holds for the intersection numbers $L_3\cdot E_{3,\pm 1}$ and $L_2\cdot E_{3,\pm 1}$. Using the fact that we only defined these exceptional divisors up to the double cover involution, we can fix some of the intersection numbers.

We define $E_{1,2}$ as the extremal exceptional divisor above $Q_1$  intersecting~$L_1$.
It follows that $L_1\cdot E_{1,2}=1$ and  $L_1\cdot E_{1,-2}=0$.
Analogously, we define $E_{2,2}$ as the extremal exceptional divisor above $Q_2$  intersecting $L_1$.
It follows that $L_1\cdot E_{2,2}=1$ and  $L_1\cdot E_{2,-2}=0$.
Finally, we define $E_{3,1}$ as the extremal exceptional divisor above $Q_3$ intersecting $L_2$.
It hence follows that $L_2\cdot E_{3,1}=1$ and $L_1\cdot E_{3,-1}=0$.

These definitions leave fourteen intersection numbers pairwise undetermined.
The values of these intersection numbers can only vary between $1$ and $0$,
and hence it is possible to run through all the possible values. 
The Picard lattice of a K3 surface injects into the $H^{1,1}$ of the surface 
(in fact, Lefschetz $(1,1)$-theorem tells us that if $X$ is a K3 surface over $\C$, then $\Pic X \cong H^2(X,\Z)\cap H^{1,1}(X)$, cf.~\cite[1.3.3]{huybrechts}).
As the $H^{1,1}$ of a K3 surface is $20$-dimensional, 
we only look for those combinations of intersection numbers giving a Gram matrix of rank less than or equal to $20$.

Among all the combinations in $\{0,1\}^{\times 7}$, four satisfy this condition.
A computation shows that these four combinations all give rise to a lattice as 
in the statement of the result.
\end{proof}

\begin{corollary}
The lattice $\Lambda$ is isometric to the lattice
$$
U\oplus E_8(-1)^{\oplus 2} \oplus \langle -12 \rangle .
$$ 
\end{corollary}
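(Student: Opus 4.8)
The plan is to identify the abstract lattice $\Lambda$ from the three invariants we already have in hand: by Proposition~\ref{p:Lambda} the lattice $\Lambda$ has rank $19$, signature $(1,18)$, and discriminant group $\Z/12\Z$. Since a generator of the discriminant group has order $12$ and the total discriminant has size $12$, the discriminant form takes the value $\tfrac{q}{12}\bmod 2\Z$ on a generator for some odd $q$ coprime to $12$; this is exactly the discriminant form of $\langle -12\rangle$ (up to the sign/isomorphism class, which one pins down from the signature). First I would observe that $\Lambda$ is an even lattice (all generators $H, L_i, E_{j,k}$ have even self-intersection by Lemmas~\ref{l:IntH}, \ref{l:IntE}, \ref{l:IntL}, and the Gram matrix is integral), indefinite, of signature $(1,18)$. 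Then I would invoke Nikulin's theory of discriminant forms: an even indefinite lattice of rank $\ge 3$ is determined up to isometry by its signature and discriminant form (more precisely, by its genus, which for such lattices contains a single class — cf. Nikulin, or \cite{huybrechts}).

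The key step is therefore to exhibit \emph{some} even lattice with the right signature and discriminant form and then quote uniqueness. The candidate $U\oplus E_8(-1)^{\oplus 2}\oplus\langle -12\rangle$ has rank $2+8+8+1=19$, signature $(1,0)+2\cdot(0,8)+(0,1)=(1,18)$, and — since $U$ and $E_8(-1)$ are unimodular — discriminant group $\Z/12\Z$ with discriminant form that of $\langle -12\rangle$, namely $x\mapsto -\tfrac{1}{12}x^2\bmod 2\Z$ on the generator. It then remains only to check that the discriminant form computed from the explicit Gram matrix of $\Lambda$ in Proposition~\ref{p:Lambda} agrees with this one. Because the discriminant group is cyclic, this reduces to computing the value of the form on a single generator: one picks the dual-lattice element of order $12$, squares it in $\Lambda^\vee\otimes\Q$, and reads off the class mod $2\Z$; the signature mod $8$ (Milgram's formula, $\mathrm{sign}\equiv -2\bmod 8$ here, consistent with $(1,18)$) fixes the remaining ambiguity between $\langle -12\rangle$ and $\langle 12\rangle$-type forms.

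The main obstacle I expect is purely bookkeeping: Proposition~\ref{p:Lambda} leaves several intersection numbers determined only up to the double-cover involution, so strictly speaking one has four candidate Gram matrices to work with. I would handle this by noting, as in the proof of that proposition, that all four yield lattices with the \emph{same} invariants (rank, signature, discriminant group), hence — once even-ness and indefiniteness are recorded — all four are isometric to the \emph{same} lattice by Nikulin uniqueness; so it suffices to verify the discriminant form for one representative. A clean alternative, avoiding discriminant-form computations altogether, is to exhibit an explicit primitive embedding: inside $\Lambda$ one locates a copy of $U$ (e.g. spanned by suitable combinations of $H$ and a fibre/exceptional class) and two orthogonal copies of $E_8(-1)$ among the chains of $E_{j,k}$ together with some $L_i$'s, peeling these off as orthogonal direct summands and checking that the rank-$1$ orthogonal complement is generated by a vector of square $-12$. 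Either route closes the argument; I would present the discriminant-form route as the primary one since the hard counting has already been done in Proposition~\ref{p:Lambda}.
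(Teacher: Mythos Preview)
Your approach is essentially the same as the paper's: compute the candidate's invariants, match them with those of $\Lambda$ from Proposition~\ref{p:Lambda}, and invoke Nikulin's uniqueness result (\cite[Corollary 1.13.3]{nikulin}). Two small corrections: Nikulin's criterion is not ``rank $\ge 3$'' in general but rather $\operatorname{rk}\Lambda \ge \ell(A_\Lambda)+2$ together with indefiniteness---this happens to reduce to rank $\ge 3$ here because $A_\Lambda$ is cyclic, so your application is fine, but the phrasing is misleading; and the signature is $1-18=-17\equiv 7\pmod 8$, not $-2$, so your Milgram aside needs adjusting (it is in any case not needed, since the discriminant form can be read directly from the Gram matrix).
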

\begin{proof}
The lattice in the statement has the same rank, signature, discriminant group, and discriminant form as $\Lambda$. Then the result follows from \cite[Corollary 1.13.3]{nikulin}.
\end{proof}

We now will show that $\Lambda$ is in fact the full Picard lattice.

\begin{lemma}\label{l:NIsotr}
The family $\DS$ is not isotrivial.
\end{lemma}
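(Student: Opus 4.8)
The plan is to exhibit two fibres $\DS_{s_1}$ and $\DS_{s_2}$ of the pencil that are not isomorphic as K3 surfaces, which immediately shows the family cannot be isotrivial. The cleanest invariant to separate fibres by is the Picard number, or more precisely the position of a fibre inside the moduli of lattice-polarised K3 surfaces. Since we already know from Proposition~\ref{p:Lambda} that the \emph{generic} fibre has Picard number at least $19$, and since specialisation can only make the Picard number jump up, it suffices to produce one special fibre whose Picard number is strictly larger than $19$ (hence $20$), together with the observation that a $19$-dimensional sublattice already sits in the generic fibre; an isotrivial family would force \emph{every} smooth fibre to have the same Picard number, so a single rank-$20$ member among infinitely many rank-$19$ members rules out isotriviality.

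The key steps, in order, would be: first, recall (or note) that if $\DS$ were isotrivial then after a finite base change all smooth fibres would become isomorphic, so in particular they would all have isometric transcendental lattices and equal Picard number; second, observe that the generic fibre has Picard number exactly $19$ — this needs the transcendental lattice to be nonzero, which follows because $\Lambda$ from Proposition~\ref{p:Lambda} has rank $19 < 20$ and, by the very next results in the paper, $\Lambda$ is the full Picard lattice of $S$ (alternatively, one can argue that the period of a very general member is non-constant, see below); third, exhibit a rational value $s_0 \in \A^1$ for which $\DS_{s_0}$ acquires an extra divisor class — for instance the values of $s$ at which the two cubics $B_0, B_1$ become tangent at a further point, or where an extra tritangent line appears, or simply the value $s=0$ corresponding to the original surface $R=B$, for which the Theorem in the Introduction asserts Picard number $20$. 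Comparing: $\DS_{s_0}$ has Picard number $20$ while the general fibre has Picard number $19$, so the fibres are not all isomorphic and the family is not isotrivial.

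Alternatively, and perhaps more self-containedly, one can argue via the period map: an isotrivial family of K3 surfaces has constant periods, so the associated Picard--Fuchs differential equation has only algebraic (in fact constant) solutions; but the transcendental part of $\DS_\eta$ is two-dimensional (rank $22 - 19 = 3$... — here one must be slightly careful: rank $22-19=3$, so the transcendental lattice is rank $3$, and the relevant period is essentially a single non-constant function, matching the well-known fact that the Apéry–Fermi pencil has a genuinely transcendental period satisfying an order-$3$ ODE with non-constant monodromy). Showing the monodromy is non-trivial, or equivalently that the period is non-constant, again forces non-isotriviality.

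\textbf{The main obstacle} I anticipate is making rigorous the claim that the generic Picard number is exactly $19$ rather than $20$ — i.e. that $\Lambda$ is the \emph{full} Picard lattice — since naively one needs the upper bound on $\rho(S)$, which is exactly what the subsequent lemmas in the paper (including this one) are building toward; this risks circularity. The way around it is to not use the full-Picard-lattice statement at all for this lemma, and instead argue purely at the level of one or two explicit fibres: pick two \emph{specific} rational values $s_1 \ne s_2$ and show directly, by an explicit geometric computation on the branch curves $B_0 \cup B_1$ (e.g. counting tritangent lines, or computing the configuration of $(-2)$-curves, or comparing some easily-computed invariant such as the discriminant of an explicitly-spanned sublattice of $\Pic$), that $\DS_{s_1} \not\cong \DS_{s_2}$. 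Since isotriviality would make all smooth fibres isomorphic, this suffices without ever invoking the exact generic Picard number.
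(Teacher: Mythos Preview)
Your proposal is essentially correct, and in its final form it converges to exactly the argument the paper gives: the paper simply forward-references Section~\ref{s:specials}, where the fibres over $s=1$ and $s=-1$ are both shown to have Picard number $20$ but with non-isometric Picard lattices (discriminants $8$ and $24$ respectively), so those two fibres cannot be isomorphic as K3 surfaces and the family is not isotrivial.

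You correctly spotted the circularity hazard in your first idea (using ``generic rank $19$ vs.\ special rank $20$''), since the upper bound $\rho(\overline{S})\le 19$ of Corollary~\ref{c:PN} is precisely what this lemma is used to establish; your proposed workaround---compare two \emph{specific} fibres directly via an explicit lattice invariant---is exactly what the paper does. The only refinement worth noting is that the paper does not need one of the two fibres to have rank $19$: both chosen fibres are singular K3's of rank $20$, and it is the \emph{discriminant} (or equivalently the transcendental lattice $\langle 2\rangle\oplus\langle 4\rangle$ vs.\ $\langle 2\rangle\oplus\langle 12\rangle$) that distinguishes them.
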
 
\begin{proof}
We will see in the next section that 
the fibers above $s=1$ and $s=-1$ have non-isometric geometric Picard lattices (cf. subsections~\ref{ss:s1} and~\ref{ss:sm1}),
and therefore
the fibers themselves are not isomorphic as K3 surfaces.
\end{proof}

\begin{corollary}\label{c:PN}
$\rho (\overline{S}) \le 19$.
\end{corollary}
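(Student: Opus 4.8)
The statement amounts to the standard fact that a non-isotrivial pencil of K3 surfaces over a base of characteristic zero cannot have geometric generic Picard number $20$; combined with Lemma~\ref{l:NIsotr}, this yields $\rho(\overline S)\le 19$.

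First I would reduce to the complex setting: fix an embedding $\overline K\inj\C$, so that $\overline S$ becomes a complex K3 surface with unchanged Picard lattice and $\DS$ becomes a family $\DS\to\A^1_\C$. After replacing $\A^1$ by a Zariski-dense open $U$ over which a fixed smooth and proper model of $\DS$ has all geometric fibres smooth K3 surfaces (the finitely many bad parameters being discarded), one has for every $s_0\in U(\C)$ the specialization homomorphism $\Pic\overline S\inj\Pic\DS_{s_0}$, which is injective and compatible with the intersection forms; in particular $\rho(\DS_{s_0})\ge\rho(\overline S)$ for all such $s_0$.

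Now suppose for contradiction that $\rho(\overline S)=20$. Then $\rho(\DS_{s_0})=20$ for \emph{every} $s_0\in U(\C)$, i.e.\ every fibre is a singular K3 surface. On the other hand the relative hyperplane class $H$, with $H^2=2$ by Lemma~\ref{l:IntH}, makes $\DS\to U$ a family of polarised K3 surfaces of degree $2$ and hence defines a period map $\varphi\colon U^{\mathrm{an}}\lra\Gamma\backslash\Omega$ to the $19$-dimensional period domain of such surfaces. Since the family is not isotrivial (Lemma~\ref{l:NIsotr}), the Torelli theorem for K3 surfaces forces $\varphi$ to be non-constant, so its image---the image of a non-constant holomorphic map from a curve---is uncountable. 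But the periods of singular K3 surfaces form only a countable subset of $\Gamma\backslash\Omega$: a singular K3 surface is determined up to isomorphism by the $\mathrm{SL}_2(\Z)$-equivalence class of its rank-two positive-definite transcendental lattice, and there are only countably many such classes (cf.~\cite{huybrechts}). This contradiction proves $\rho(\overline S)\le 19$.

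The argument is routine; the only points needing care are the choice of $U$ together with a simultaneous smooth model, and the invocation of the Picard specialization map. Equivalently, one can phrase the contradiction through variations of Hodge structure: a rank-$20$ geometric generic Picard lattice would force the transcendental part of the weight-two variation carried by the family to have rank $2$ and Hodge numbers $(h^{2,0},h^{1,1},h^{0,2})=(1,0,1)$, whose period domain is $0$-dimensional; the period of the transcendental part would then be locally constant, hence constant as the base is connected, and by Torelli the family would be isotrivial, contradicting Lemma~\ref{l:NIsotr}.
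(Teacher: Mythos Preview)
Your argument is correct and follows the same underlying idea as the paper: a non-isotrivial one-parameter family of K3 surfaces cannot have geometric generic Picard number~$20$, so Lemma~\ref{l:NIsotr} gives $\rho(\overline S)\le 19$. The paper simply cites \cite[Corollary~3.2]{dolgachev} for this fact, whereas you unfold the proof via the period map, Torelli, and the countability of singular K3 surfaces (equivalently, via the rigidity of a weight-two VHS with $h^{1,1}=0$). The two approaches are the same in substance; yours is self-contained while the paper's is a one-line citation. The only technical wrinkle you should make explicit is that the fibres $\DS_s$ have ADE singularities, so ``family of polarised K3 surfaces'' really means the family of minimal resolutions (which exists simultaneously over a dense open, possibly after a finite base change); you acknowledge this caveat, so it is not a gap.
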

\begin{proof}
As the family $\DS$ is a $1$-dimensional non-isotrovial family.
Then, recalling that~$S$ is birational to the generic fiber $\DS_{\eta}$ of the family,
the statement follows for example from \cite[Corollary 3.2]{dolgachev}.
\end{proof}

\begin{proposition}\label{p:PN}
The surface $S$ has geometric Picard rank $19$.
\end{proposition}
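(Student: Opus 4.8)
The plan is to combine the lower bound already packaged in Proposition~\ref{p:Lambda} with the upper bound from Corollary~\ref{c:PN}. The sublattice $\Lambda\subset\Pic\overline S$ generated by $H$, the $L_i$ and the $E_{j,k}$ has rank $19$, so $\rho(\overline S)\ge 19$; on the other hand Corollary~\ref{c:PN} gives $\rho(\overline S)\le 19$, since $\DS$ is a non-isotrivial one-parameter family (Lemma~\ref{l:NIsotr}) and $S$ is birational to its generic fibre. Hence $\rho(\overline S)=19$, which is exactly the assertion of Proposition~\ref{p:PN}.

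**In detail I would proceed as follows.** First recall that $\Pic\overline S$ is a torsion-free finitely generated abelian group (true for any K3 surface) carrying the intersection form, and that $\Lambda$ embeds in it primitively-or-not but at least as a sublattice of rank $19$ by Proposition~\ref{p:Lambda}; since rank is monotone under sublattice inclusion, $\rho(\overline S)=\rk\Pic\overline S\ge\rk\Lambda=19$. Second, invoke Corollary~\ref{c:PN} for the reverse inequality $\rho(\overline S)\le 19$. The two inequalities together force the equality, and no further lattice-theoretic input is needed for the rank statement itself.

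**The main obstacle** is not in this proposition — the work has all been done upstream. The genuinely delicate point, which was settled in Proposition~\ref{p:Lambda}, is that after fixing the labelling ambiguities of the exceptional divisors one must check that \emph{every} admissible choice of the remaining fourteen pairwise-undetermined intersection numbers $L_i\cdot E_{j,k}\in\{0,1\}$ that keeps the Gram matrix of rank $\le 20$ actually yields a rank-$19$ lattice with the stated invariants; equivalently, that the ambiguity does not secretly drop the rank to $18$ or raise it to $20$. Granting that, the present proposition is immediate. One should also note for completeness that the equality $\rho(\overline S)=19$ combined with the Corollary to Proposition~\ref{p:Lambda} shows $\Lambda$ is the full Picard lattice only up to finite index a priori; the identification of $\Pic\overline S$ with $U\oplus E_8(-1)^{\oplus2}\oplus\langle-12\rangle$ on the nose will require the separate overlattice analysis carried out in the next section, but that is beyond what Proposition~\ref{p:PN} claims.
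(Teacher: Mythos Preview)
Your argument is correct and matches the paper's own proof essentially verbatim: combine the lower bound $\rho(\overline S)\ge 19$ from the rank of $\Lambda$ (Proposition~\ref{p:Lambda}) with the upper bound $\rho(\overline S)\le 19$ from Corollary~\ref{c:PN}. Your additional commentary on where the real work lies and on the separate overlattice analysis needed for Theorem~\ref{t:Main} is accurate and well placed.
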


\begin{proof}
From Corollary~\ref{c:PN} we know that the geometric Picard number is at most $19$; Proposition~\ref{p:Lambda} tells us that $\Lambda$ is a sublattice of $\Pic (\overline{S})$ of rank $19$.
Hence the rank of $\Pic (\overline{S} )$ is  exactly $19$.
\end{proof}

\begin{theorem}\label{t:Main}
The lattice $\Lambda$ is the whole geometric Picard lattice $\Pic (\overline{S})$.
\end{theorem}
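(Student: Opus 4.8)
The plan is to show that the rank-$19$ sublattice $\Lambda \subseteq \Pic(\overline S)$ is in fact saturated, i.e.\ primitive, so that equality follows from Proposition~\ref{p:PN}. Since $\rho(\overline S) = 19$ by Proposition~\ref{p:PN} and $\Lambda$ already has rank $19$, the quotient $\Pic(\overline S)/\Lambda$ is a finite abelian group, and it suffices to bound its order. The key numerical input is the discriminant: by the Corollary to Proposition~\ref{p:Lambda} we have $\operatorname{disc}\Lambda = -12$ (up to sign), so if $\Lambda \subsetneq \Pic(\overline S)$ with index $n$, then $\operatorname{disc}\Pic(\overline S) = -12/n^2$. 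Hence $n \in \{1, 2\}$: the only possible overlattice corresponds to $n = 2$, which would force $\operatorname{disc}\Pic(\overline S) = -3$. So the whole problem reduces to ruling out an index-$2$ overlattice.

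First I would recall the standard mechanism for overlattices: an overlattice $\Lambda' \supseteq \Lambda$ of index $2$ corresponds to an isotropic element of order $2$ in the discriminant group $A_\Lambda \cong \Z/12\Z$ (with its discriminant quadratic form). The unique element of order $2$ in $\Z/12\Z$ is $6 \bmod 12$, and its value under the discriminant form of $\langle -12\rangle$ (which generates the nontrivial part of $A_\Lambda$, since $U \oplus E_8(-1)^2$ is unimodular) is $6^2 \cdot (-1/12) = -36/12 = -3 \equiv 1 \pmod{2\Z}$, which is \emph{not} zero in $\Q/2\Z$. Therefore $A_\Lambda$ admits no nonzero isotropic element, so $\Lambda$ admits no nontrivial even overlattice at all. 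Since $\Pic(\overline S)$ is an even lattice (it is a sublattice of the K3 lattice, or directly because $C^2$ is even for every curve class on a K3 surface by adjunction), any overlattice of $\Lambda$ inside $\Pic(\overline S)$ must itself be even, and hence must equal $\Lambda$.

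I expect the main obstacle to be purely bookkeeping: one must correctly identify the discriminant \emph{form} (not merely the discriminant group) of $\Lambda$, and verify that $6 \in \Z/12\Z$ is non-isotropic for that form. This was essentially already done in the proof of the Corollary to Proposition~\ref{p:Lambda}, where $\Lambda$ was identified with $U \oplus E_8(-1)^{\oplus 2} \oplus \langle -12\rangle$ via \cite{nikulin}; the discriminant form of $\langle -12\rangle$ is $(x \mapsto -x^2/12)$ on $\Z/12\Z$, and the computation above shows its only $2$-torsion element is anisotropic. An alternative, perhaps cleaner, phrasing avoids discriminant forms entirely: an index-$2$ overlattice of $U \oplus E_8(-1)^{\oplus 2} \oplus \langle -12\rangle$ would be an even lattice of signature $(1,18)$ and discriminant $-3$; but an even lattice has $|\operatorname{disc}|$ that cannot be made arbitrarily small by primitivity constraints — concretely, restricting the even overlattice to the rank-one part shows the glue vector $(v, g)$ with $v \in (U \oplus E_8(-1)^2)^\vee$ and $g = \tfrac12 e$ (where $\langle e\rangle = \langle -12\rangle$) has square $v^2 - 3$, which is odd since $v^2 \in 2\Z$, contradicting evenness.

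So the proof is: assume $\Lambda \subsetneq \Pic(\overline S)$; then since both have rank $19$ the index is finite, and since $\operatorname{disc}\Lambda = -12$ the index is $1$ or $2$; an index-$2$ even overlattice would have discriminant $-3$ and corresponds to an order-$2$ isotropic element of $(\Z/12\Z, q)$ with $q(6) = -3 \not\equiv 0 \bmod 2\Z$, a contradiction; hence the index is $1$ and $\Lambda = \Pic(\overline S)$. Combined with the Corollary to Proposition~\ref{p:Lambda}, this gives $\Pic(\overline S) \cong U \oplus E_8(-1)^{\oplus 2} \oplus \langle -12\rangle$, which is the asserted description up to the isometry $\langle -12\rangle \cong \langle -4\rangle \oplus \langle -2\rangle$ — wait, that last isometry does \emph{not} hold as abstract lattices (discriminants $12$ vs.\ $8$), so the final displayed lattice in the introductory Theorem must arise instead from a different, larger overlattice coming from the additional structure at special fibers; in the present theorem we only claim $\Lambda = \Pic(\overline S) \cong U \oplus E_8(-1)^{\oplus 2} \oplus \langle -12\rangle$, and I would state it precisely in that form.
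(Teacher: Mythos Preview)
Your proof is correct, and its skeleton matches the paper's: both show that $\Lambda$ has no proper even overlattice, so $\Lambda=\Pic(\overline S)$ once the rank is known to be $19$. The mechanism you use, however, is different. You appeal to Nikulin's correspondence between even overlattices and isotropic subgroups of the discriminant group, and check directly that the unique order-$2$ element $6\in\Z/12\Z$ satisfies $q(6)=-3\not\equiv 0\bmod 2\Z$. The paper instead uses a one-line parity argument: an even lattice of \emph{odd} rank has even discriminant (its Gram matrix reduces modulo $2$ to an alternating form, hence is singular in odd dimension), so an even rank-$19$ lattice of discriminant $\pm 3$ is impossible. Your discriminant-form computation is more robust and would survive if the rank were even; the paper's argument is shorter here precisely because $19$ is odd. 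Your ``alternative, cleaner phrasing'' with the glue vector having odd square is in fact the hands-on version of the paper's parity claim.

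One clarification on your final aside: the discrepancy you noticed between $\langle -12\rangle$ and $\langle -4\rangle\oplus\langle -2\rangle$ is not an issue. The introductory Theorem concerns the surface $B$, which is (birational to) the special fibre at $s=0$ with Picard rank $20$, treated separately in Section~\ref{s:specials}; Theorem~\ref{t:Main} is about the generic fibre $S$ of rank $19$, and there the lattice is indeed $U\oplus E_8(-1)^{\oplus 2}\oplus\langle -12\rangle$.
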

\begin{proof}
In this proof, for sake of brevity, let $P$ denote the geometric Picard lattice of~$S$.
Assume by contradiction that $\Lambda$ is not the whole geometric Picard lattice $P$ of $S$;
from Proposition~\ref{p:PN} the Picard lattice of~$S$ 
(here simply denoted by  $P$) has rank $19$, 
and therefore $\Lambda$ is a proper finite-index sublattice of $P$.
As $\Lambda$ has discriminant $12$ (cf.~\ref{p:Lambda}),
it follows that $P$ has discriminant~$3$.
So $P$ is an even lattice of rank $19$ and discriminant $-3$,
getting a contradiction as even lattices of odd rank have even discriminant.
This shows that $\Lambda$ is the whole Picard lattice.
\end{proof}


\begin{corollary}\label{c:TX}
The transcendental lattice $T(S)$ of $S$ is isometric to the lattice
$$
U\oplus \langle 12 \rangle .
$$
\end{corollary}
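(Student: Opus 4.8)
The plan is to compute the transcendental lattice $T(S)$ directly from the Picard lattice $\Pic(\overline{S})$, now known to equal $\Lambda \cong U\oplus E_8(-1)^{\oplus 2}\oplus\langle -12\rangle$ by Theorem~\ref{t:Main}. The starting point is the standard fact that for a complex K3 surface the full second cohomology with its intersection form is the even unimodular lattice of signature $(3,19)$, namely $L_{\mathrm{K3}}:=U^{\oplus 3}\oplus E_8(-1)^{\oplus 2}$, and that $T(S)$ is by definition the orthogonal complement of $\Pic(\overline{S})$ inside $H^2(S,\Z)\cong L_{\mathrm{K3}}$. Since $\Pic(\overline{S})$ has rank $19$ and signature $(1,18)$, its orthogonal complement has rank $3$ and signature $(2,1)$.

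First I would invoke Nikulin's theory of discriminant forms: for a primitive sublattice $M$ of an even unimodular lattice $N$ with orthogonal complement $M^{\perp}$, one has a canonical isomorphism of discriminant groups $A_M\cong A_{M^{\perp}}$ under which the discriminant quadratic forms satisfy $q_{M^{\perp}}\cong -q_M$ (see \cite[Corollary 1.13.3]{nikulin} or \cite[Proposition 1.6.1]{nikulin}). Applying this with $M=\Pic(\overline{S})$, whose discriminant group is $\Z/12\Z$ (Proposition~\ref{p:Lambda}) with discriminant form the one carried by the summand $\langle -12\rangle$, I conclude that $T(S)$ is an even lattice of signature $(2,1)$, with discriminant group $\Z/12\Z$ and discriminant quadratic form equal to $-q_{\langle -12\rangle}\cong q_{\langle 12\rangle}$.

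Next I would exhibit the candidate lattice $U\oplus\langle 12\rangle$: it is even, has rank $3$, signature $(1,1)+(1,0)=(2,1)$, discriminant group $\Z/12\Z$, and discriminant form $q_{\langle 12\rangle}$ (since $U$ is unimodular and contributes nothing to the discriminant). Thus $T(S)$ and $U\oplus\langle 12\rangle$ are two even lattices of the same signature $(2,1)$ and the same discriminant form. Finally I would apply Nikulin's uniqueness criterion \cite[Corollary 1.13.3]{nikulin}: an even lattice is determined up to isometry by its signature and discriminant form provided the rank is large enough relative to the length of the discriminant group; here rank $3$ and a cyclic discriminant group of length $1$ comfortably satisfy the hypothesis $\operatorname{rk}\geq 2+\ell(A)$, so the two lattices are isometric. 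This gives $T(S)\cong U\oplus\langle 12\rangle$.

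The only real subtlety — and the step I would be most careful about — is checking that the discriminant \emph{form} of $\Pic(\overline{S})$ really is $q_{\langle -12\rangle}$ and not merely that the discriminant \emph{group} is $\Z/12\Z$; but this is immediate from the explicit orthogonal decomposition $\Lambda\cong U\oplus E_8(-1)^{\oplus 2}\oplus\langle -12\rangle$ established in the corollary to Proposition~\ref{p:Lambda}, since $U$ and $E_8(-1)$ are unimodular. Everything else is a routine application of lattice theory, so I expect no genuine obstacle.
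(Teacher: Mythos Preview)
Your proof is correct and follows essentially the same approach as the paper: both appeal to Nikulin's \cite[Corollary 1.13.3]{nikulin}, using that the discriminant form of $T(S)$ is the negative of that of $\Pic(\overline{S})$ (known from Theorem~\ref{t:Main}) and that $U\oplus\langle 12\rangle$ has the required invariants. Your write-up is simply more explicit about the verification of rank, signature, discriminant form, and the hypothesis $\operatorname{rk}\geq 2+\ell(A)$ than the paper's terse proof.
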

\begin{proof}
It follows from \cite[Corollary 1.13.3]{nikulin}, 
noticing that the lattice in the statement satisfies the hypothesis.
Note that we know the form on the discriminant group $A_T$ as it is minus the form on $A_{\NS (X)}$,
which we know by Theorem~\ref{t:Main}. 
\end{proof}

\section{Special fibers}\label{s:specials}
For special values of $s$, the singularities of the branch curve $B$ change. 
We will compute the geometric Picard lattice in these particular cases,
obtaining examples of singular K3 surfaces with non-isometric Picard lattice.
This implies that these fibers are not isomorphic, 
proving Lemma~\ref{l:NIsotr}.

The computations performed and the arguments used in these special cases are 
completely analogous to the general case, and therefore they will be omitted in this paper.
They are nevertheless available in the accompanying \texttt{MAGMA} code (cf.~\cite{FvS18}).
Notice that as these surfaces turn out to have Picard lattice with maximal rank, no 
particular upper bound on the rank is needed.

\subsection{The case $s=1$}\label{ss:s1}
The surface $\DS_1$ in the weighted projective space $\P (1,1,1,3)$ is a double cover of $\P^2$ ramified along a sextic which
has exactly seven singular points: 

\begin{align*}
Q_1 = (1:0:0:0), &\; Q_2 = (0:1:0:0),\\
Q_3 = (1:1:2:0), &\; Q_4 = (-1: 1:0:0),\\
Q_5 = (1:0:1:0), &\; Q_6 = (0:1:1:0),\\
Q_7 = (1:& 1: 1:0).
\end{align*}
The points $Q_1$ and $Q_2$ are singularities of $A_5$-type; the point $Q_3$ is of $A_3$-type;
the points $Q_4,...,Q_7$ are of $A_1$-type. 

\begin{figure}[h]
\includegraphics[scale=0.25]{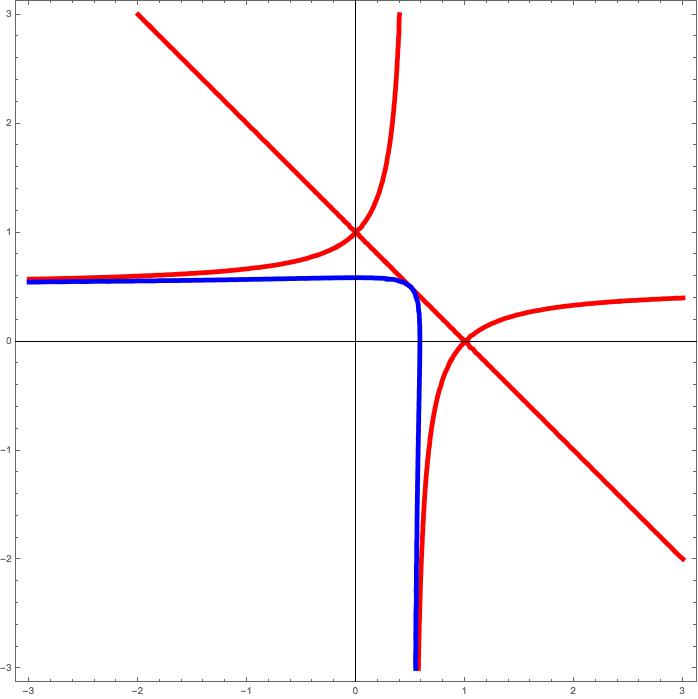}

{\em The curve $B$ for $s=1$.}
\end{figure}
Consider the following five lines of $\P^2$.
\[\ell_1\colon z =0,\;\;\;\ell_2\colon z=2x, \;\;\;\ell_3\colon z=2y,\;\;\;\ell_4\colon z=x, \;\;\; 
\ell_5\colon z=y.\]
All these lines intersect the branch locus of $\DS_1$ with even multiplicity everywhere.
Let $S_1\to \DS_1$ be the resolution of the  singularities of $\DS_1$.
Proceeding as in the previous sections, we find that
the (geometric) Picard lattice of $S_1$ is isometric to the lattice 
$$
U\oplus E_8(-1)^{\oplus 2}\oplus \langle -4\rangle \oplus \langle -2 \rangle.
$$
From this it also follows that the transcendental lattice $T(S_1)$ is isometric to the lattice $\langle 2 \rangle \oplus \langle 4 \rangle$ (as it is the only positive definite binary quadratic form with discriminant $8$).
\begin{remark}
As the discriminant of $\Pic S_1$ is $-8$, 
we know by Elkies and Sch\"{u}tt (cf.~\cite[Section 10]{schuett})
that the geometric Picard lattice of $S_1$ is realised over $\Q$,
i.e., $\Pic S_1 = \Pic \overline{S_1}$.
This can also be directly observed by noting that all the divisors used are defined over $\Q$.
\end{remark}

\subsection{The case $s=0$}\label{ss:s0}
The surface $S_0$ is isomorphic to $S_1$.
To see this, it is enough to notice that the branch locus of $S_0$ 
is obtained by reflecting the branch locus of $S_1$ with respect to the line $x+y-z=0$.
Then the two surfaces have the same Picard lattice.
It follows that the geometric Picard lattice of $S_0$ is isometric to the lattice 
$$
U\oplus E_8(-1)^{\oplus 2}\oplus \langle -4\rangle \oplus \langle -2 \rangle.
$$
Notice that the surface $S_0$ is the fiber of our family corresponding to the surface $Q$ (cf. \eqref{eq:Q}) 
we started with.
As it is a singular K3 surface, its automorphism group is infinite
and therefore it has more than just 48 automorphisms (cf. Remark~\ref{r:Symmetries}).
In~\cite[Section 10.2]{shimada}, Shimada provides a finite set of generators for the automorphisms group,
consisting of the 48 symmetries and six extra automorphisms coming from reflection with respect to the walls of the ample chamber which do not correspond to  $-2$-vectors.

\subsection{The case $s=-1$}\label{ss:sm1}
The surface $X_{-1}$ has exactly five singular points, 

\begin{align*}
Q_1 = (1:0:0:0), &\; Q_2 = (0:1:0:0),\\
Q_3 = (1:1:2:0), &\; Q_4 = (-1: 1:0:0),\\
Q_5 = (0:& 0: 1:0).
\end{align*}
The points $Q_1$ and $Q_2$ are singularities of $A_5$-type;
the point $Q_3$ is of $A_3$-type; the points $Q_4,Q_5$ are of $A_1$-type. 

\begin{figure}[h]
\includegraphics[scale=0.25]{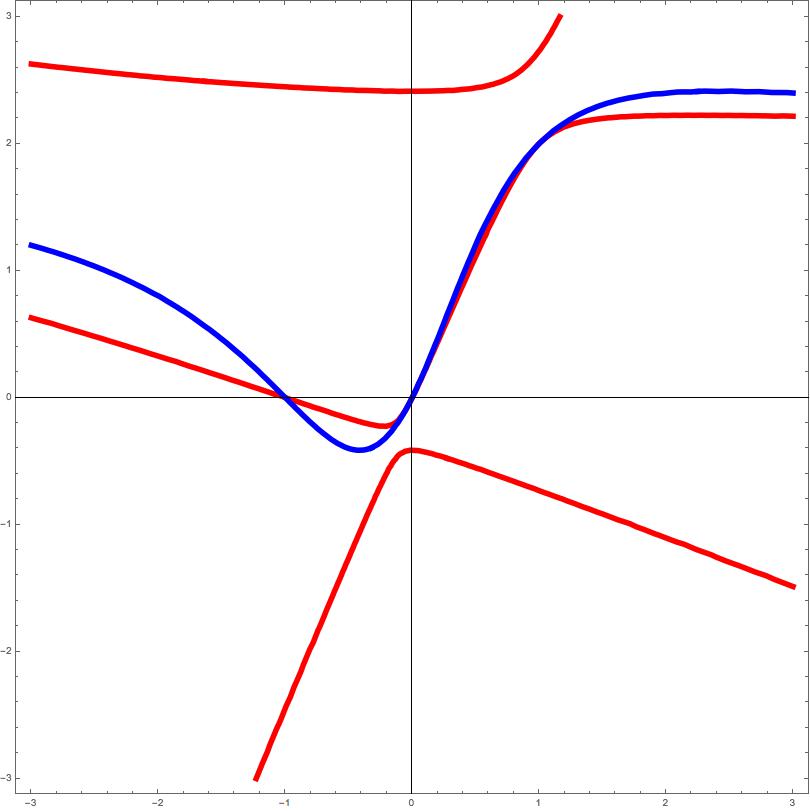}

{\em The curve $B$ for $s=-1$.}
    \label{fig:s1}
\end{figure}

In order to generate a sublattice of the Picard lattice we use the exceptional divisors lying above the singular points, 
the hyperplane sections and the divisors $L_1,...,L_8$ defined before,
with $\alpha$ a square root of $2$.
Using the same strategy as before we have that
the geometric Picard lattice of $X_{-1}$ is isometric to the lattice
$$
U\oplus E_8(-1)^{\oplus 2}\oplus \langle -12 \rangle \oplus \langle -2 \rangle.
$$
Reasoning as in Subsection~\ref{ss:s1}, 
we show that $T(S_{-1})$ is isometric to  
$\langle 2 \rangle \oplus \langle 12 \rangle$.
In~\cite[Theorem 1.2(1)]{shimada1}, Shimada proves that the automorphism group of the surface is generated by 15 involutions. 

\subsection{The case $s=2$}\label{ss:s2}
Using the same argument as in Subsection~\ref{ss:s0},
one sees that $S_{2}$ is isomorphic to $S_{-1}$.

\section{Transformation to the Ap\'ery--Fermi-family}
We have seen that 
\[\RS\colon 0=1+s+\frac{1}{u^2-1}+\frac{1}{v^2-1}+\frac{1}{w^2-1}\]
is birationally a pencil of K3 surfaces with $U \oplus \langle 12 \rangle$
as transcendental lattice.\\ 

We will now explain how this family is related to a famous piece of 
mathematics that started with Ap\'ery's spectacular proof
of the irrationality of $\zeta(3)$, in which the Ap\'ery numbers
\[ A_0=1,\;A_1=5,\;A_2=73,\;A_4=1445,\ldots, A_n:=\sum_{k=0}^n {n \choose k}^2{n+k \choose k}^2\]
played an important role. In the paper \cite{beukerspeters}, 
F. Beukers and C. Peters showed that the generating function 
\[\Phi(\lambda) :=\sum_{k=0}^\infty A_n \lambda^n\]
of the Ap\'ery numbers is a period function of a certain pencil 
$\AS \to \P^1$ of K3 surfaces. The differential operator 
\[\theta^3-\lambda(2\theta+1)(17\theta^2+17\theta+5)+\lambda^2(\theta+1)^3 ,\]
of which $\Phi(\lambda)$ is the  unique solution, holomorphic around $0$, was identified as the  Picard--Fuchs operator of the pencil, which has
\[ 0,\;\;\;17+12\sqrt{2},\;\;\;17-12\sqrt{2},\;\;\;\infty\]
as singularities. 

Somewhat later, C. Peters and J. Stienstra~\cite{petersstienstra} studied the
level sets of the Laurent polynomial 
\[F:=x+\frac{1}{x}+y+\frac{1}{y}+z+\frac{1}{z} ,\]
which turned up in the theory of {\em Fermi surfaces}, \cite{giesekerknoerrertrubowitz}. These are affine parts of K3 surfaces, and the
pencil
\[\ZS:  x+\frac{1}{x}+y+\frac{1}{y}+z+\frac{1}{z}=\xi+\frac{1}{\xi}\]
over the $\xi$-line has 
\[\theta^3 -\xi^2(\theta+1)(17\theta^2+34\theta+20)+\xi^4(\theta+2)^3,\;\;\theta=\xi\frac{d}{d\xi} \]
as Picard--Fuchs operator, and
\[0,\;\;\;\pm 3 \pm \sqrt{2},\;\;\;\infty\]
as singularities. This operator is just the {pullback of the
Ap\'ery operator} via the substitution $\lambda=\xi^2$.
In fact, the family $\AS \to \P^1$ can be identified as the {\em quotient} of 
$ \ZS \to \P^1$ by the map induced by
\[ (x,y,z,\xi) \mapsto (-x,-y,-z,-\xi)\]
and thus provides a simpler and more symmetric description of the Ap\'ery--Fermi pencil $\AS \to \P^1$
of \cite{beukerspeters}.
Obviously, the Ap\'ery--Fermi family itself is a pullback from the family
\[ \FS: x+\frac{1}{x}+y+\frac{1}{y}+z+\frac{1}{z}=t\]
over the $t$-line. Note that this family still has a 
symmetry, as the fibre over $t$ is isomorphic to the fibre over $-t$
via the map $(x,y,z) \to (-x,-y,-z)$. As a result, the Picard--Fuchs
operator  of $\FS$ has a symmetry and is the pullback via the substitution
$\mu=1/t^2$ of the operator

\[\theta^3-2\mu (2\theta+1)(10\theta^2+10\theta+3)+\mu^2(2\theta+1)(\theta+1)(2\theta+3),\;\;\;\theta:=\mu\frac{d}{d\mu},\]
with singularities at $0,\; 1/4,\; 1/36,\; \infty$. So this is a close relative of 
the Ap\'ery operator; the holomorphic solution $\Psi(\mu)$ around $0$ expands as follows:
\[\Psi(\mu)=\sum_{n=0}^\infty b_n \mu^n=1+6 t+ 90 t^2+1860t^3+44730t^4+\ldots\]
\[b_n:={2 n \choose n}a_n,\;\;\;a_n:=\sum_{k=0}^n { m \choose k}^2 {2k \choose k} .\]
The differential equation and the numbers $b_n$ appeared in \cite{domb} in certain
models of conductivity in crystals. The number $b_n$ counts the number of paths of lenght 
$2n$ in the standard cubical lattice $\Z^3$ with the Euclidean distance.
The series 
\[\sum_{n=0}^{\infty}a_n \mu^n\]
is the period of a rational elliptic surface with
singular fibres  $I_6,I_3,I_2,I_1$
and appears in \cite{stienstrabeukers}. It can be realised by the family
of cubics
\[ (x+y)(y+z)(z+x)+\mu xyz=0.\]

In their paper~\cite{petersstienstra}, Peters and Stienstra also determined  
the Picard lattice of the general fibre of the Ap\'ery--Fermi family and found 
them to be
\[ U \oplus (-E_8)^{\oplus 2} \oplus \langle -12 \rangle \]
which is exactly the lattice we found for our family of K3 surfaces!
By the global Torelli theorem for K3 surfaces, 
we suspect the two families to be related. 
One can use the determination of the Picard lattice for the 
special members of  Section~\ref{s:specials} and compare them with the 
Picard lattices of for the special members of the Ap\'ery--Fermi family determined 
in \cite{petersstienstra}. This suggests the identification 
\[ t=\pm ( 2-4s) \]
of the fibres of the families $\RS$ and $\FS$. And indeed, if we put 
\[G(x,y,z):=\frac{1}{x^2-1}+\frac{1}{y^2-1}+\frac{1}{z^2-1}\]
then the substitution
\[x \mapsto \frac{1+x}{1-x},\;\; y \mapsto \frac{1+y}{1-y},\;\;z \mapsto \frac{1+z}{1-z}\]
converts $G$ into $F$, up to a factor and a constant. To be precise, one has the following\\

\centerline{\bf \em Remarkable Identity}

\[ 4 G\left( \frac{1+x}{1-x},\frac{1+y}{1-y},\frac{1+z}{1-z}\right) =F(x,y,z)-6.\]

Hence, the above substitution maps the pencil of surfaces
\[0=1+s+\frac{1}{x^2-1}+\frac{1}{y^2-1}+\frac{1}{z^2-1}\]
to the pencil of surfaces
\[x+1/x+y+1/y+z+1/z=2-4s.\]

So we see that our family $\RS$ is, up to a linear transformation in the
parameter, the Ap\'ery--Fermi family $\FS$!
There is much more to be said about the rich algebraic geometry around 
this pencil. 
Note that by removing
denominators we obtain the family of quartics given by
$$
x^2yz+yz+xy^2z+xz+xyz^2+xy-(2-4s)xyz=0.
$$

We refer to \cite{bertinlecacheux} for a recent contribution.
In that paper, Bertin and Lecacheux prove (among other things) that the generic member of the family admits 27 elliptic fibrations, and they also describe the singular fibers of all of them.\\

We remark that the Ap\'ery--Fermi family and its Picard--Fuchs operator also
play a role in the analysis of the equal mass scalar Feynmann diagram shown below.\\

\begin{center}

\includegraphics[width=10cm]{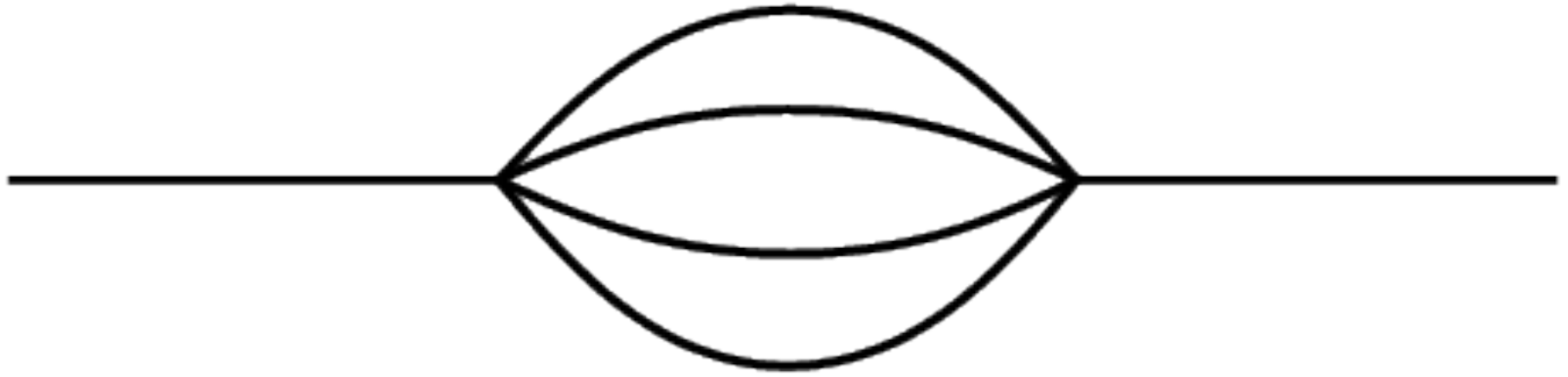}\\

{\em Banana graph.}
\end{center}
 
From the extensive literature on the subject, we mention \cite{verrill},
\cite{vanhove}, \cite{blochkerrvanhove}, \cite{borweinstraubwan}. \\

Another interesting occurence of the family of K3 surfaces with the same Picard lattice is the family of {\em Hessian quartics} of the family of cubic surfaces 
\[C_a\colon 0=x_0^3+x_1^3+x_2^3+x_3^3-3x_0^2(x_1+x_2+x_3)\]
described in~\cite[Proposition 5.7.(4)]{dardanellivangeemen}. 
This gives yet another realisation of the pencil
 as a family of quartics in $\P^3$.\\

We conlude with the remark that the appearance of the irrationality $Q$ from the introduction 
is to be expected from the symmetry considerations. 
In fact, as already stated in \cite{hennsmirnov}, it is natural to introduce 
a variable $z$ related to the third Mandelstam variable $u$ by the relation
\[-u =m^2\frac{(1-z)^2}{z}.\]
Then the well-known relation 
\[ s+t+u=4m^2\] 
between the Mandelstam variables translates directly into the algebraic relation 
\[ \frac{(1-x)^2}{x}+\frac{(1-y)^2}{y}+\frac{(1-z)^2}{z}+4=0,\]
which upon expansion is just the equation
\[ x+\frac{1}{x}+y+\frac{1}{y}+z+\frac{1}{z}=2 .\]
By the above {\em Remarkable Identity}, this surface is the transform of the surface $R=\RS_0$.

On this surface there is a symmetric collection of divisors and the $2$-loop amplitudes appearing
in Bhabha scattering seem to be special solutions of {\em a natural differential system on this K3 surface}, rather then on the affine $x,y$-plane, and a theory of harmonic polylogarithms should properly be formulated as living on the above K3 surface.\\
 
So we see the `same' pencil of K3 surfaces appearing over and over again, 
with a multitude of links to various physical contexts. 
A question posed 
by J. Stienstra \cite{stienstra} is if there are more direct links 
between these physical occurrences that would provide {\em a priori} explanations for the miracles.

\section*{Acknowledgements}
We would like to thank Johannes Henn for asking the original question that led 
to this paper. We also thank Marco Besier, Claude Duhr,  Alice Garbagnati, Bert van Geemen, 
Davide Cesare Veniani, Stefan Weinzierl and the anonymous referee for helpful comments and discussions. 
The first author was supported by SFB/TRR 45.
The authors also would like to express a special thanks to the Mainz Institute for Theoretical Physics (MITP) for its hospitality and support,
and to Oliver Labs for his software for visualization of algebraic surfaces, ``Surfex''.

\end{document}